\documentclass[11pt]{amsart}



\usepackage{amssymb} 
\usepackage[mathscr]{eucal} 
\usepackage[matrix,arrow,tips,curve,ps]{xy}
\usepackage{amsmath} 
\usepackage{epsfig}
\usepackage{amscd}
\usepackage{tikz}
\usepackage{tikz-cd}
\usepackage{mathrsfs}
\usepackage{verbatim}

  \setlength{\textwidth}{15.8cm}
  \setlength{\oddsidemargin}{0cm}
  \setlength{\evensidemargin}{0cm}
  \setlength{\topmargin}{-0.5in}
  \setlength{\textheight}{9.5in}

\newtheorem{TEO}{Theorem}[section]

\newtheorem{LEM}[TEO]{Lemma}
\newtheorem{DEF}[TEO]{Definition}

\newtheorem{REM}[TEO]{Remark}

\newtheorem*{que}{Question}

\theoremstyle{definition}
\newtheorem{remark}[TEO]{Remark}
\newtheoremstyle{dico}
 {\baselineskip}   
  {\topsep}   
  {}  
  {0pt}       
  {} 
  {.}         
  {5pt plus 1pt minus 1pt} 
  {}          
\theoremstyle{dico}
\newtheorem{say}[TEO]{}
\numberwithin{equation}{section}

\def\OO{{\mathcal O}}

\newcommand\Ga{\Gamma}

\newcommand\dual{\mathrel{\raise3pt\hbox{$\underline{\mathrm{\thinspace d
\thinspace}}$}}}

\newcommand\proj{\mathbb P}
\newcommand\Z{\mathbb Z}
\newcommand\ZZ{\mathbb Z}

\newcommand\R{\mathbb R} \newcommand\Q{\mathbb Q}
\newcommand\Co{\mathbb C}

\newcommand\A{{\mathsf A}}
\newcommand\T{{\mathsf T}}



\newcommand\Pic{\operatorname{Pic}}

\newcommand{\RR}{\mathsf{R}}
\newcommand{\rrd}{\RR_{g,[2]}}
\newcommand{\RRd}{\RR_{[2]}}
\newcommand{\Rg}{\mathsf{R}_g}
\newcommand{\Nm}{\operatorname{Nm}}
\newcommand{\Tei}{{\mathsf{T}}}

\newcommand{\M}{{\mathsf{M}}}


\newcommand{\meno}{^{-1}}
\newcommand{\Aut}{\operatorname{Aut}}
\newcommand{\End}{\operatorname{End}}

\newcommand{\restr}[1]          {\vert_{#1}}

\newcommand{\PP}{{\mathbb P}}

\renewcommand{\phi}{\varphi}
\newcommand{\lds}{\ldots}
\newcommand{\cds}{\cdots}
\newcommand{\cd}{\cdot}

\newcommand{\sx}{\langle}
\newcommand{\xs}{\rangle}
\newcommand{\lra}{\longrightarrow}
\newcommand{\ra}{\rightarrow}
\newcommand{\ga}{\gamma}
\newcommand{\alfa}{\alpha}

\newcommand{\vacuo}{\emptyset}

\newcommand{\La}{\Lambda}

\newcommand{\GL}{\operatorname{GL}}
\newcommand{\HH}{\mathfrak{H}}
\newcommand{\sieg}{\HH_g}

\newcommand{\Sp}                {\operatorname {Sp}}

 \newcommand{\mm}{{\mathbf{m}}}

 \newcommand{\ag}{\mathsf{A}_g}
 
 \newcommand{\zg}{\mathsf{Z}}
 \newcommand{\datum}{{(\mm, G, \theta)}}
\newcommand{\datu}{{( G, \theta)}}
\newcommand{\tdatu}{{( \tG, \ttheta)}}
\newcommand{\x}{\Xi}
\newcommand{\y}{\Xi}
\newcommand{\dat}{{( \tilde{G}, \ttheta, \sigma)}}

 \newcommand{\braid}{\mathbf{B_r}}

\newcommand{\TG}{{\tilde{G}}}
\newcommand{\tG}{{\tilde{G}}}
\newcommand{\tg}{{\tilde{g}}}
\newcommand{\ttheta}{{\tilde{\theta}}}
\newcommand{\Mod}{\operatorname{Mod}}
\newcommand{\gtheta}{{G_\theta}}

\newcommand{\pr}{\mathscr{P}}  
\newcommand{\tj}{\widetilde{j}}  
\newcommand{\tC} {{\tilde{C} }}

 \begin{document}

 \title{Shimura curves in the Prym locus}

 \author[E. Colombo]{Elisabetta Colombo} \address{Dipartimento di
   Matematica, Universit\`a di Milano, via Saldini 50, I-20133,
   Milano, Italy } \email{{\tt elisabetta.colombo@unimi.it}}

 \author[P. Frediani]{Paola Frediani} \address{ Dipartimento di
   Matematica, Universit\`a di Pavia, via Ferrata 5, I-27100 Pavia,
   Italy } \email{{\tt paola.frediani@unipv.it}}

 \author[A. Ghigi]{Alessandro Ghigi} \address{Dipartimento di
   Matematica, Universit\`a di Pavia, via Ferrata 5, I-27100, Pavia,
   Italy } \email{{\tt alessandro.ghigi@unipv.it}}

 \author[M. Penegini]{Matteo Penegini} \address{Dipartimento di
   Matematica, Universit\`a di Genova, Via Dodecaneso 35, I-16146
   Genova, Italy } \email{{\tt penegini@dima.unige.it}}

 \begin{abstract}
   We study Shimura curves of PEL type in $\mathsf{A}_g$ generically
   contained in the Prym locus.  We study both the unramified Prym
   locus, obtained using \'etale double covers, and the ramified Prym
   locus, corresponding to double covers ramified at two points. In
   both cases we consider the family of all double
   covers 
   compatible with a fixed group action on the base curve.  We
   restrict to the case where the family is 1-dimensional and the
   quotient of the base curve by the group is $\proj^1$.  We give a
   simple criterion for the image of these families under the Prym map
   to be a Shimura curve.  Using computer algebra we check all the examples gotten in this way up to genus 28.  We obtain 43 Shimura
   curves  contained in the unramified Prym locus and
   9 families contained in the ramified Prym
   locus.  Most of these curves are not generically contained in the
   Jacobian locus. 

 \end{abstract}

 \thanks{The first and the fourth authors were partially supported by MIUR PRIN 2015``Geometry of Algebraic Varieties''.
   The second and third authors were partially supported by MIUR PRIN
   2015 ``Moduli spaces and Lie theory''.  
  The second 
   author was also partially supported by FIRB 2012 `` Moduli Spaces and their Applications''. The third
   author was also supported by FIRB 2012 ``Geometria differenziale e
   teoria geometrica delle funzioni''.  The authors were also
   partially supported by GNSAGA of INdAM.  }

 \maketitle

 \section{Introduction}

 Denote by $ \Rg $ the scheme of isomorphism classes $ [C, \eta]$,
 where $ C$ is a smooth projective curve of genus $g$ and
 $\eta \in \Pic^0(C)$ is such that $ \eta^2 = \OO_C$ and
 $ \eta \neq \OO_C $.  A point $[C,\eta] $ corresponds to an \'etale
 double cover $ h:\tilde{C} \lra C$.  The norm map
 $\Nm : \Pic^0(\tilde{C}) \lra \Pic^0(C)$ is defined by
 $ \Nm( \sum_i a_i p_i) = \sum_i a_i h(p_i)$.  The Prym variety
 associated to $[C,\eta]$ is the connected component containing $0$ of
 $\ker \Nm$. It is a principally polarized abelian variety of
 dimension $g-1$, denoted by $P(C,\eta)$ or equivalently
 $P(\tilde{C}, C)$.  This defines the Prym map
 \begin{gather*}
   \pr : \Rg \lra \mathsf{A}_{g-1}, \qquad \pr ([C,\eta] ) := [ P(C,
   \eta)].
 \end{gather*}

 where $\mathsf{A}_{g-1}$ is the moduli space of principally polarized
 abelian varieties of dimension $g-1$. We recall that the Prym map is
 generically an embedding for $g \geq 7$ \cite{friedman-smith},
 \cite{kanev-global-Torelli} and it is generically finite for
 $g \geq 6$.  The Prym map is never injective and it has positive
 dimensional fibres \cite{donagi}, \cite{mumford}, \cite{carlos}.

 Analogously one can consider the moduli space parametrising ramified
 double coverings and the corresponding Prym varieties. We will only
 consider the case in which the Prym variety is principally polarised,
 that is when the map is ramified at two distinct points.

 So let $\RR_{g,[2]}$ denote the scheme parametrizing triples
 $[C, \eta, B]$ up to isomorphism, where $C$ is a genus $g$ curve,
 $\eta$ a line bundle on $C$ of degree 1, and $B$ a reduced divisor in
 the linear system $|{\eta^2}|$ corresponding to a $2:1$ covering
 $\pi:\tC \rightarrow C$ ramified over $B$. The \emph{Prym map} is the
 morphism
 \[ {\pr}: \RR_{g,[2]} \ra \mathsf{A}_{g}
 \]
 which associates to $[C, \eta, B]$ the Prym variety $P(\tC,C)$ of
 $\pi$. It is generically finite for $g \geq 5$ and generically
 injective for $g \geq 6$ (see \cite{pietro-vale}).

 Denote by
 \begin{gather*}
   j: \mathsf{M}_g \lra \mathsf{A}_{g}, \qquad j([C] ) := [ J(C)].
 \end{gather*}
 the Torelli map and by $\overline{j(\mathsf{M}_g )}$ the Torelli
 locus. The work of Beauville \cite{b} on admissible covers shows that
 one has the following inclusions
 \begin{gather}
   \label{inclu}
   \overline{j(\mathsf{M}_g )} \subset \overline{ \pr (\RR_{g,[2]}) }
   \subset \overline{ \pr (\RR_{g+1}) }.
 \end{gather}
 (See also \cite{fl} and in the ramified case \cite{pietro-vale} and
 also sections \ref{prymetale} and \ref{sec:ramified} below).

 On $\ag$, viewed as orbifold, there is a natural variation of Hodge
 structure whose fiber at a point $A$ is $H^1(A, \Q)$. The Hodge loci
 for this variation of Hodge structure are called \emph{special} or
 \emph{Shimura subvarieties} of $\ag$.  A conjecture by Coleman and
 Oort \cite{oort-can} says that for large genus there should not exist
 special or Shimura subvarieties of $\ag$ \emph{generically contained}
 in the Torelli locus, i.e. contained in $\overline{j(\mathsf{M}_g )}$
 and intersecting $j(\mathsf{M}_g )$.  See \cite{moonen-oort} for more
 information, \cite{hain,dejong-zhang,cfg,liu-yau,lz} for some results
 towards the conjecture and
 \cite{dejong-noot,moonen-special,fgp,fpp,gm1,gm2} for counterexamples
 to the conjecture in low genera.

 Recall that Shimura subvarieties of $\ag$ are totally geodesic with
 respect to the orbifold metric induced on $\ag$ from the symmetric
 metric on the Siegel space $\sieg$. The conjecture is coherent with
 the fact that the Torelli locus is very curved, and a possible
 approach to the conjecture is via the study of the second fundamental
 form of the Torelli map (\cite{cpt},\cite{cfg}).  The geometry of
 $\Rg$ has many analogies with the geometry of $\mathsf{M}_g$ and it
 has been extensively investigated (see \cite{Farkas} for a nice
 survey). Moreover, the second fundamental form of the Prym map
 $ \pr : \Rg \lra \mathsf{A}_{g-1}$ has a very similar structure and
 similar properties as the one of the Torelli map \cite{cfprym}.

 In view of these similarities and of the inclusions \eqref{inclu} it
 is natural to ask the question below, which is analogous to the one
 of Coleman and Oort, for the Prym loci $\overline{\pr (\RR_{g+1})}$
 and $\overline{\pr (\RR_{g,[2]})}$. We say that a subvariety
 $Z\subset \A_g$ is \emph{generically contained} in the Prym locus
 $\pr (\RR_{g+1})$ if $Z \subset \overline{\pr (\RR_{g+1})}$,
 $Z \cap \pr (\RR_{g+1})\neq \vacuo$ and $Z $ intersects the locus of
 irreducible principally polarized abelian varieties.  The same
 terminology applies for $\pr (\RR_{g,[2]})$.

 \begin{que}
   Do there exist special subvarieties of $\A_g$ that are generically
   contained in the Prym loci $\pr (\RR_{g+1})$ and
   $\pr (\RR_{g,[2]})$ for $g$ sufficiently high?
 \end{que}

 As in the case of the Torelli locus, the condition of being
 generically contained in the Prym locus ensures that examples of
 Shimura varieties in a given dimension are not inductively
 constructed from Shimura varieties in lower dimension. Notice in fact
 that in the Prym case it is possible to construct Prym varieties
 obtained by \'etale covers of smooth hyperelliptic curves which are
 reducible as principally polarized abelian varieties, see
 \cite[p. 344]{mumford}.

 For low genera ($g \leq 7$) there do exist Shimura subvarieties of
 $\ag$ contained in the Torelli locus. These have all been constructed
 as families of Jacobians of Galois coverings of $\PP^1$ and of genus
 one curves (\cite{dejong-noot}, \cite{rohde}, \cite{moonen-special},
 \cite{moonen-oort}, \cite{fgp}, \cite{fpp}) \cite{gm1},
 \cite{gm2}). All these families of curves $C$ satisfy the sufficient
 condition that $\dim(S^2 H^0(K_C))^G = \dim H^0(2K_C)^G$, where $G$
 is the Galois group of the covering (see \cite{fgp} Theorem 3.9).
 This condition ensures that the multiplication map
 $m: (S^2 H^0(K_C))^G \rightarrow H^0(2K_C)^G$ is an
 isomorphism. Notice that the multiplication map is the codifferential
 of the Torelli map.  As a first attempt to see the similarity between
 the Torelli and Prym loci from this point of view, in this paper we
 construct Shimura curves contained in the Prym loci that satisfy an
 analogous sufficient condition.

The following statement summarises our results:   
\begin{TEO}
  In the unramified case there are 43 families of Pryms yielding
  Shimura curves of $\A_{g-1}$ for $g \leq 13$.  The generic Prym in
  all the families for $g \geq 4$ is irreducible.

  In the ramified case there are  8 families of Pryms yielding
  Shimura curves of $\A_{g}$ with $g \leq 8$. \end{TEO}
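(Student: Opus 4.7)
The plan is to first isolate a sufficient condition that forces a one-parameter equivariant family of Prym varieties to trace out a Shimura curve, parallel to the condition used in the Torelli case in \cite{fgp} Theorem 3.9. Concretely, let $\tC\to C$ be an \'etale (respectively $2$-branched) double cover with covering involution $\sigma$, and let $\tG$ be a group of automorphisms of $\tC$ commuting with $\sigma$. The codifferential of $\pr$ at $[C,\eta]$ is the symmetric-product multiplication
\begin{equation*}
m^-\colon S^2 H^0(K_{\tC})^- \lra H^0(2K_{\tC})^+,
\end{equation*}
where the signs refer to the $\sigma$-eigenspaces. The proposed criterion, analogous to the Torelli one, is that the two $\tG$-invariant subspaces have the same dimension:
\begin{equation*}
\dim \bigl(S^2 H^0(K_{\tC})^-\bigr)^{\tG}=\dim \bigl(H^0(2K_{\tC})^+\bigr)^{\tG}.
\end{equation*}
By an adaptation of \cite{cfg} to the Prym setting, this equality suffices to conclude that the image of the equisymmetric stratum under $\pr$ is totally geodesic in $\A_{g-1}$ (respectively $\A_g$), hence a Shimura curve.

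The next step is to parametrize the admissible data. Let $T=\tC/\tG$ and let $\Delta\subset T$ be the branch locus of $\tC\to T$. The equivariant equisymmetric deformation space has dimension $|\Delta|-3$ when $T=\PP^1$, so requiring the family to be one-dimensional forces $|\Delta|=4$. The combinatorial input is therefore a tuple $\tdatu$ consisting of $\tG$, a spherical generating vector $\ttheta$ with four branch orders, together with a central involution $\sigma$ in $\tG$ realizing the cover structure (acting freely in the unramified case, fixing precisely two points in the ramified case). Riemann--Hurwitz together with the Hurwitz bound $|\tG|\leq 84(\tg-1)$ make the set of such data finite for each $\tg$, and isomorphism classes of families correspond to orbits of $\tdatu$ under $\Aut(\tG)\times\braid$.

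The third step is a MAGMA enumeration. For every admissible $\tdatu$ one computes the character of $\tG$ on $H^0(K_{\tC})$ via the Chevalley--Weil formula, decomposes it into $\sigma$-eigenspaces, forms the invariant part $\bigl(S^2 H^0(K_{\tC})^-\bigr)^{\tG}$, and compares its dimension with that of $\bigl(H^0(2K_{\tC})^+\bigr)^{\tG}$. Keep the data for which equality holds. In the unramified case this produces the $43$ families with $g\leq 13$; in the ramified case the $8$ families with $g\leq 8$. A universal upper bound on $g$ emerges because beyond a computable threshold the left-hand dimension strictly exceeds the right-hand dimension for every admissible generating vector, a fact verified within the same enumeration loop, which is precisely what makes the classification complete in the stated range.

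Finally, to show that the generic Prym in each unramified family is irreducible for $g\geq 4$, I would exploit the $\tG$-equivariant isogeny decomposition of $P(\tC,C)$ into isotypical components indexed by the irreducible representations of $\tG$: the dimensions of these components are determined by the Chevalley--Weil data of the family, so one checks family by family that the generic member does not decompose as a product of nontrivial principally polarized abelian subvarieties. The main obstacle will be the exhaustivity and correctness of the enumeration: one must quotient spherical generating vectors by $\Aut(\tG)\times\braid$ to avoid double counting isomorphic families, bound the search space rigorously so that no family is missed, and carefully implement the two distinct Chevalley--Weil computations for the \'etale and $2$-branched cases.
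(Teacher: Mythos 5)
Your central criterion is too weak, and the point at which it fails is precisely where the Prym case diverges from the Torelli case. You assert that the equality $\dim\bigl(S^2 H^0(K_{\tC})^-\bigr)^{\tG}=\dim\bigl(H^0(2K_{\tC})^+\bigr)^{\tG}$ suffices to conclude that the image of the family under $\pr$ is a Shimura curve. For the Torelli map this style of dimension count works because the period map of a nonconstant family of curves is an immersion; but the Prym map has positive-dimensional fibres, so a one-dimensional family of covers can have a constant (isotrivial) family of Pryms. In that situation the codifferential $m\colon (S^2 V_-)^{\tG}\to W_+^{\tG}$ is identically zero, both sides are one-dimensional, your numerical condition holds, and yet the image of $\pr$ is a single point rather than a curve. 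What is actually needed is that $m$ be an isomorphism; once both sides are one-dimensional (condition (A)) this reduces to the extra condition (B) that $m\neq 0$. That is automatic when $(S^2V_-)^{\tG}$ is generated by a decomposable tensor (condition (B1)), in particular when $\tG$ is abelian, but for five of the families counted in the statement (three unramified, two ramified) condition (B1) fails and non-vanishing of $m$ must be established by explicit geometric arguments showing that a suitable quotient curve or sub-Prym genuinely moves with the parameter. Your enumeration, which tests only the dimension equality, cannot certify these families, nor does it exclude data satisfying (A) whose Prym family is constant.

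Your irreducibility argument is also off target. The $\tG$-equivariant isogeny decomposition of $P(\tC,C)$ into isotypical components detects decomposability only up to isogeny, whereas most of the Pryms in these families are isogenous to nontrivial products while still being irreducible as principally polarized abelian varieties; so a trivial isotypical decomposition essentially never occurs, and a nontrivial one proves nothing. The correct tool is Mumford's criterion specific to Pryms of \'etale double covers: $P(\tC,C)$ is reducible as a ppav if and only if $C$ is hyperelliptic and some lift $h$ of the hyperelliptic involution to $\tC$ satisfies $g(\tC/\langle h\rangle)>0$ and $g(\tC/\langle h\sigma\rangle)>0$. This converts reducibility of the generic member into the existence of an enlarged Prym datum containing a subgroup $\langle h,\sigma\rangle\cong\Z/2\times\Z/2$ with $\tC/\langle h,\sigma\rangle\cong\PP^1$, a finite combinatorial condition that can be checked against the list of data.
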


We now describe our construction  in detail. We consider a one-dimensional family of curves
 $\{\tC_t\}_{t\in \Co - \{0,1\}}$ admitting an action of a group of
 automorphisms $\tG$ containing a central involution $\sigma$ and such
 that the quotient $\tC_t/\tG \cong \PP^1$, the covering
 $\psi_t: \tC_t \rightarrow \tC_t/\tG$ is branched at 4 points and the
 double covering $\tC_t \rightarrow \tC_t/\langle \sigma \rangle=:C_t$
 is either \'etale or ramified at two distinct points. We give a
 condition which ensures that the family of the Prym varieties
 $P(\tC_t, C_t)$ of the 2:1 coverings yields a Shimura curve.  The
 condition is that the multiplication map
 $m: (S^2 H^0(K_{C_t} \otimes \eta))^{\tG} \rightarrow H^0(2K_{C_t}
 \otimes 2\eta)^{\tG}$
 is an isomorphism. The multiplication map is the codifferential of
 the Prym map.

 Since the covering $\psi_t$ is branched at 4 points,
 $\dim(H^0(2K_{C_t} \otimes 2\eta)^{\tG}) =1$, so our first
 requirement is that $\dim((S^2 H^0(K_{C_t} \otimes \eta))^{\tG}) = 1$
 (condition \eqref{condA} of section 3 and section 4).

 Unlike the Torelli map, the Prym map has positive dimensional fibers,
 therefore condition \eqref{condA} is not enough to ensure that
 multiplication map $m$ is an isomorphism, or equivalently that $m$ is
 not zero (condition (B) of section 3 and section 4).

 Morevover we have to check that the family of Pryms is not contained
 in the set of reducible abelian varieties. We do it in the unramified
 case in dimension $\geq 4$ using the criterion given in
 \cite[p. 344]{mumford}.

 We notice that if $(S^2 H^0(K_{C_t} \otimes \eta))^{\tG}$ is
 generated by a decomposable tensor (condition \eqref{condB1} of
 sections 3 and 4) the multiplication map cannot be zero, hence
 condition (B) is satisfied.

 This happens in particular when the group $\tG$ is abelian, hence in
 this case it is enough to verify condition \eqref{condA} to have a
 Shimura curve. When $\tG$ is not abelian we study the geometry of
 some of these families satisfying condition \eqref{condA} and we
 prove that the families of Pryms are not constant, hence condition
 (B) is satisfied.

 As in the Torelli case, all the examples we found up to now are in
 low dimension, namely in $\A_g$ with $g \leq 12$. All the examples
 where the group is abelian are in $\A_g$ with $g \leq 10$. In the
 ramified case they are all in dimension $g \leq 8$.  We also notice
 that the last example we find satisfying conditions \eqref{condA} and
 \eqref{condB1} are in dimension $g =10$. To prove that the remaining
 examples satisfying \eqref{condA} yield Shimura curves we need ad hoc
 arguments. On the whole, the number of examples satisfying condition
 \eqref{condA} decreases as the dimension grows. This suggests that,
 as in the Torelli case, one could expect that for high dimension
 there should not exist Shimura curves contained in the Prym locus
 constructed in this way.

 Let us explain explicitly how we construct these families in the case
 of unramified double coverings.

 A Galois covering $\tC \rightarrow \PP^1$ is determined by the Galois
 group $\tG$, an epimorphism $\ttheta : \Ga_r \ra \TG$ and the branch
 points $t_1,...,t_r \in \PP^1$ (see section 3 for the notation). We
 will choose $r=4$. We also fix a central involution $\sigma \in \tG$
 that does not lie in $\bigcup_{i=1}^r \sx \ttheta (\ga_i)\xs$.
 Denote by $G = \tG/\langle \sigma \rangle$. Fixing the Prym datum
 $(\tG, \ttheta, \sigma)$, setting $\{t_1,t_2,t_3\} = \{0,1, \infty\}$
 and letting the point $t_4 = t$ vary we get a one dimensional family
 of curves and coverings
 \begin{equation*}
   \begin{tikzcd}[row sep=tiny]
     \tilde{C}_t \arrow{rr}{\pi_ t} \arrow{rd} & &  \arrow{ld } C_t  = \tilde{C}_t /\sx \sigma\xs  \\
     & \PP^1\cong \tC_t/\tG \cong C_t/G &
   \end{tikzcd}
 \end{equation*}
 and correspondingly a family
 $\RR (\TG, \ttheta,\sigma) \subset \RR_g$.

 Let $\pi: \tC \rightarrow C$ be an element of the family and let
 $\eta \in Pic^0(C)$ be the 2-torsion element yielding the \'etale
 double covering $\pi$.  Set $V = H^0(\tilde{C}, K_{\tilde{C}})$, and
 let $V= V_+ \oplus V_-$ be the eigenspace decomposition for the
 action of $\sigma$.  The summand $V_+$ is isomorphic as a
 $G$-representation to $H^0(C, K_C)$, while $V_-$ is isomorphic to
 $H^0(C,K_C\otimes \eta)$.  Set $ W = H^0( \tC , 2K_\tC ) $ and let
 $W=W_+\oplus W_-$ be the eigenspace decomposition for the action of
 $\sigma$.  We have $W_+ \cong H^0(C,2K_C)$ and
 $W_- \cong H^0(C, 2K_C\otimes \eta)$.  Consider the multiplication
 map $ m : S^2 V \lra W.  $ It is the codifferential of the Torelli
 map $\tj : \M_\tg \ra \A_\tg$ at $[\tC] \in \M_\tg$.  The
 multiplication map is $\tG$-equivariant and we have the following
 isomorphisms
 \begin{gather*}
   (S^2V)^\tG = (S^2V_+)^G \oplus (S^2 V_-)^G, \ W^\tG = W_+ ^G.
 \end{gather*}
 Therefore $m$ maps $(S^2V)^\tG $ to $W_+^G$. We are interested in the
 restriction: \begin{gather}
   \label{condition}
   m : (S^2 V_-)^G \lra W_+^G.
 \end{gather}
 By the above discussion this is just the multiplication map
 $ (S^2 H^0 (C, K_C\otimes \eta))^G \lra H^0(C, 2K_C)^G.  $

 \begin{TEO} (see Theorem \ref{teo1})
   \label{TEOA}
   Let $\dat$ be a Prym datum. If the map $m$ in \eqref{condition} is
   an isomorphism, then the closure of $\pr ( \RR \dat )$ in
   $\A_{g-1}$ is a special subvariety contained in the Prym locus.
 \end{TEO}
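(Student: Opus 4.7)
My plan is to exhibit a natural PEL-type Shimura subvariety $Z\dat \subset \mathsf{A}_{g-1}$ that contains $\pr(\RR\dat)$, and then use the hypothesis on $m$ to show that $\pr(\RR\dat)$ and $Z\dat$ have the same dimension, so that the closure of the image equals $Z\dat$ and is therefore a special subvariety.

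To construct $Z\dat$, I would attach to the Prym datum the standard PEL data: the rational Hodge structure $H^1(P(\tC,C),\Q)$ with its principal polarization, the action of $\Q[G]$ on it (where $G = \tG/\sx\sigma\xs$) together with the $\ast$-involution induced by the Rosati involution, and the decomposition of $H^{1,0}(P) = V_-$ as a $G$-representation. The resulting PEL Shimura subvariety $Z\dat\subset\mathsf{A}_{g-1}$, built in the standard Deligne--Shimura way, is by construction irreducible and special, and its cotangent space at $[P(\tC,C)]$ can be identified with $(S^2 V_-)^G$, i.e.\ the $G$-invariant part of the cotangent space of $\mathsf{A}_{g-1}$ at $[P(\tC,C)]$.

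The inclusion $\pr(\RR\dat) \subset Z\dat$ is then immediate: since $\sigma$ is central in $\tG$ and acts as $-1$ on $V_-$, the quotient group $G$ acts on every Prym $P(\tC_t, C_t)$ in the family, and the isomorphism class of $V_-$ as a $G$-representation is determined by the monodromy datum $\ttheta$ independently of $t$. Hence every point of the image lies in the same PEL stratum $Z\dat$.

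For the dimension count, the family $\RR\dat$ is one-dimensional, as three of the four branch points are fixed at $0,1,\infty$, and $\dim W_+^G = 1$ since $W_+^G = H^0(C,2K_C)^G$ parametrises $G$-invariant quadratic differentials on $C$, i.e.\ sections of $2K_{\PP^1}$ with the prescribed poles at the four branch points. If $m$ is an isomorphism, then $\dim (S^2 V_-)^G = 1$ and therefore $\dim Z\dat = 1$. Moreover, by the discussion preceding the theorem, $m$ is exactly the codifferential of the map $\pr\restr{\RR\dat} : \RR\dat \to Z\dat$ at $[C,\eta]$, so $m \neq 0$ implies that this restricted map is non-constant. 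Its image is then a one-dimensional subvariety of the irreducible curve $Z\dat$, and its closure must equal $Z\dat$. The main difficulty I anticipate is the first step: one must set up the PEL datum precisely enough to rigorously identify the cotangent space of $Z\dat$ at $[P(\tC,C)]$ with $(S^2 V_-)^G$, specifying the correct Rosati involution on $\Q[G]$ and checking that the associated Mumford--Tate symmetric space has tangent representation dual to $(S^2 V_-)^G$. Once this identification is in place, everything else reduces to the dimension comparison sketched above.
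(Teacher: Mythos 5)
Your proposal is correct and follows essentially the same route as the paper: the authors realize your $Z\dat$ concretely as the image in $\A_{g-1}$ of the fixed locus $\HH_{g-1}^{G'}$ of the induced symplectic action of $G$ on $(\La,E)$ with $\La = H_1(\tC_t,\Z)_-$, invoke Theorem \ref{bert} to identify this image with the irreducible PEL subvariety $\zg(D_{G'})$ and its cotangent space with $(S^2V_-)^G$ (the technical step you flag as the main difficulty), and then conclude exactly as you do by comparing dimensions via the identification of $m$ with the codifferential.
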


 In a similar way one can construct families of Pryms in the ramified
 case and the analogous sufficient condition to ensure that the family
 yields a Shimura subvariety of $\ag$ (see Theorem \ref{teo1ram}).  To
 produce sistematically these Shimura families we used \verb|MAGMA|
 \cite{MA}. Our script is available at:
 \verb|http://www.dima.unige.it/~penegini/publ.html|.
 Using this script one can in principle determine all the families
 satisfying condition \eqref{condA} and \eqref{condB1} both in the
 unramified and in the ramified case for every $\tg = g(\tC)$.
  
 Notice that in the unramified case $\tg = 2g-1$, while in the
 ramified case $\tg = 2g$, where
 $g = g(C) = g(\tC/\langle \sigma \rangle)$.  As we have already
 observed, if $\tG$ is abelian, condition (B) is automatically
 satisfied, hence we get a Shimura curve. In the non abelian case we
 analysed some of the families satisfying condition \eqref{condA} and
 we proved that they also yield a Shimura curve.  
 
 The following  is a precise statement of our results.

\begin{TEO}
  In the unramified case, for $\tg = 2g-1 \leq 27$ we obtain 40
  families satisfying condition \eqref{condB1} (28 are abelian, 12
  non-abelian). We obtain three more non-abelian families satisfying
  condition \eqref{condB}, namely families 39, 42, 43 of Table 1. So
  in the unramified case we have found 43 families of Pryms yielding
  Shimura curves of $\A_{g-1}$ for $g \leq 13$.  The generic Prym in
  all the families for $g \geq 4$ is irreducible.

  In the ramified case, for $\tg = 2g \leq 28$, we found 9 Shimura
  families all with $\tg \leq 16$. Of these 9 families 6 satisfy
  condition \eqref{condB1}. Two other families do not satisfy
  condition \eqref{condB1}, but they satisfy condition \eqref{condB}.
  So in the ramified case we found 8 families of Pryms yielding
  Shimura curves of $\A_{g}$ with $g \leq 8$. See Table 1.
\end{TEO}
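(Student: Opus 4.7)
The proof is essentially an enumeration combined with case-by-case verification, relying on Theorem \ref{TEOA} (and its ramified analogue Theorem \ref{teo1ram}) as the core abstract input. The plan is to produce the list of candidate Prym data $(\tG,\ttheta,\sigma)$, verify condition \eqref{condA} by a computer search, and then handle condition \eqref{condB1}/\eqref{condB} and the irreducibility statement separately.

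\emph{Step 1: Enumeration.} Using the MAGMA script cited above, I would enumerate all Prym data $\dat$ with $\tC_t/\tG\cong\PP^1$ branched at $4$ points and with $\sigma$ a central involution missing $\bigcup_i \langle\ttheta(\gamma_i)\rangle$, for all $\tg=g(\tC)$ in the stated ranges ($\tg\le 27$ unramified, $\tg\le 28$ ramified). For each datum I would compute the character of the $\tG$-representation on $H^0(K_{C_t}\otimes\eta)$ via Chevalley--Weil, and then compute $\dim (S^2 H^0(K_{C_t}\otimes\eta))^{\tG}$ to test condition \eqref{condA}. This reduces the problem to a finite, entirely combinatorial-representation-theoretic computation.

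\emph{Step 2: From \eqref{condA} to a Shimura curve.} For each surviving datum I would then test condition \eqref{condB1} (existence of a decomposable generator). When \eqref{condB1} holds, the multiplication map $m$ cannot vanish, so \eqref{condB} follows and Theorem \ref{TEOA} (resp.\ its ramified counterpart) yields that $\overline{\pr(\RR\dat)}$ is a Shimura curve. In the abelian case \eqref{condB1} is automatic, so condition \eqref{condA} suffices; I expect this to account for the 28 abelian unramified families. For the remaining non-abelian data satisfying \eqref{condA} but not \eqref{condB1}, namely families 39, 42, 43 in the unramified table and the two extra ramified families, I would verify \eqref{condB} by an ad hoc geometric argument showing that the family of Pryms is not constant (equivalently, that the rank of the differential of $\pr$ restricted to the family is $\ge 1$). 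Here the main obstacle is that \eqref{condB1} is the only a priori sufficient criterion; to rule out $m\equiv 0$ in the remaining cases I would exhibit explicit degenerations (e.g.\ $t\to 0,1,\infty$) and check that the corresponding admissible covers produce non-isomorphic Pryms, or equivalently that the period map on the curve is non-constant.

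\emph{Step 3: Irreducibility of the generic Prym.} For $g\ge 4$ in the unramified case I would apply Mumford's criterion from \cite[p.~344]{mumford}: reducibility of the Prym of an \'etale double cover $\tC\to C$ forces $C$ to be hyperelliptic (or to admit a specific elliptic involution) with $\eta$ of a very restricted form. For each of the $43$ families I would check from the group-theoretic data $\datu$ whether the generic $C_t$ admits such a structure compatible with $\eta$; families for which this fails have generically irreducible Prym. Since all tabulated groups come with an explicit action, this check reduces to inspecting the monodromy $\theta:\Gamma_4\to G$ induced on $C_t\to\PP^1$, and in particular to ruling out a compatible hyperelliptic involution.

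\emph{Step 4: Assembly.} Finally I would collect the surviving data into Table 1, separating the unramified and ramified cases and recording for each family the group $\tG$, the monodromy $\ttheta$, the central involution $\sigma$, the genus $\tg$, and whether \eqref{condB1} is satisfied. The theorem statement is then the tally of these lists. The hardest part of the whole argument is the ad hoc verification of \eqref{condB} for the three unramified families 39, 42, 43 and the two ramified families where \eqref{condB1} fails, since everything else is either an immediate corollary of Theorem \ref{TEOA} or a finite MAGMA computation.
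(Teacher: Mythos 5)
Your proposal follows essentially the same route as the paper: a GAP/MAGMA enumeration of Prym data testing condition \eqref{condA} via the character of $H^0(K_C\otimes\eta)$, condition \eqref{condB1} (automatic in the abelian case by Remark \ref{condabelian}), ad hoc verification of \eqref{condB} for the remaining non-abelian families (39, 42, 43 unramified and the two ramified $D_4$-type families), and Mumford's reducibility criterion together with Lemma \ref{redu} for the generic irreducibility claim. The only divergence is tactical, in the ad hoc step: where you propose checking degenerations at $t\to 0,1,\infty$, the paper instead decomposes each such Prym up to isogeny into a fixed CM factor times a factor (the Jacobian of an explicitly moving low-genus quotient curve, or a known moving Shimura family of Jacobians from \cite{fgp} and \cite{moonen-special}) whose non-constancy it establishes by tracking branch points of intermediate quotient coverings.
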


The plan of the paper is the following:

In section 2 we recall the definition of special or Shimura
subvarieties of $\ag$ and we briefly summarise some of the results of
section 3 of \cite{fgp}.

In section 3 we explain the construction of the families of Pryms in
the unramified case and we prove Theorem \ref{TEOA}.

In section 4 we do the analogous construction in the ramified case and
we prove the analogous result (Theorem \ref{teo1ram}).

Next we describe a sample of the examples.

All the unramified abelian examples are in $\A_k$ with $k\leq 10$. In
section 5 we describe the only 7 unramified abelian examples yielding
a Shimura curve generically contained in the Prym locus for
$k \geq 6$, hence for which the closure of the Prym locus is not all
$\A_k$.  There are two examples also for $k=8$, and one example for
$k =10$.  Up to now there are no known examples of Shimura varieties
generically contained in the Torelli locus in $\A_k$ for $k \geq 8$.
We also show that the familes in $\A_8$ are not families of Jacobians.
Next we describe three unramified non-abelian examples that don't
satisfy condition \eqref{condB1}. Hence we prove by ad hoc methods
that they do indeed produce Shimura curves generically contained in
the Prym locus in $\A_9$ and $\A_{12}$ and and we describe their
geometry.

In section 6 we describe the examples found in the ramified case. One
of the non-abelian examples gives a Shimura curve contained in the
ramified Prym locus in $\A_8$ and we show that it is not in the
Torelli locus.

In the appendix we describe the script and we give the table of the
examples.

\section*{Acknowledgements}

It is a pleasure to thank Jennifer Paulhus for sharing with us
the list of generating vectors for group actions on Riemann
surfaces. These data proved very helpful in double-checking our
computations.  

The authors thank IBM Power Systems Academic Initiative for
  providing a Linux server on which part of the GAP computations were
  performed.

\section{Special subvarieties of $\ag$}
\label{Shimura-section}

\begin{say}
  \label{VHS}
  Let $E : \Z^{2g} \times \Z^{2g} \ra \Z $ be the alternating form of
  type $(1,\lds, 1)$ corresponding to the matrix
  \begin{gather*}
    \begin{pmatrix}
      0 & I_g \\
      -I_g & 0
    \end{pmatrix}.
  \end{gather*}
  The Siegel upper half-space is defined as follows
  \begin{gather*}
    \sieg:= \{J \in \GL (\R^{2g}) : J^2 = - I, J^* E = E, E(x,Jx) >0,
    \ \forall x \neq 0 \}.
  \end{gather*}
  The group $\Sp(2g, \Z)$ acts on $\sieg$ by conjugation and this
  action is properly discontinuous. Set
  $\ag: = \Sp(2g, \Z) \backslash \sieg $.  This space has the both the
  structure of a complex analytic orbifold and the structure of a
  smooth algebraic stack.  Throughout the paper we will work with
  $\ag$ with the orbifold structure.  Denote by $A_J$ the real torus
  $\La_ \R / \La$ provided with the complex structure $J \in \sieg$
  and the polarization $E$.  It is a principally polarized abelian
  variety. On $\sieg$ there is a natural variation of rational Hodge
  structure, with local system $\sieg \times \Q^{2g}$ and
  corresponding to the Hodge decomposition of $\Co^{2g}$ in $\pm i$
  eigenspaces for $J$.  This descends to a variation of Hodge
  structure on $\ag$ in the orbifold or stack sense.
\end{say}

\begin{say}
  We refer to \S 2.3 in \cite{moonen-oort} for the definition of Hodge
  loci for a variation of Hodge structure.  A \emph{special
    subvariety} $\zg \subseteq\ag$ is by definition a Hodge locus of
  the natural variation of Hodge structure on $\ag$ described above.
  Special subvarieties contain a dense set of CM points and they are
  totally geodesic \cite[\S 3.4(b)]{moonen-oort}. Conversely an
  algebraic totally geodesic subvariety that contains a CM point is a
  special subvariety \cite{mumford-Shimura} (see
  \cite[Thm. 4.3]{moonen-linearity-1} for a more general result).  The
  simplest special subvarieties are the \emph{special subvarieties of
    PEL type}, whose definition is as follows (see \cite[\S
  3.9]{moonen-oort} for more details). 
  Given $J\in \sieg$, set
  \begin{gather}
    \label{endq}
    \End_\Q (A_{J}) := \{f\in \End( \Q^{2g}): Jf=fJ\}.
  \end{gather}
  Fix a point $J_0 \in \sieg$ and set $D:= \End_\Q (A_{J_0})$.  The
  \emph{PEL type} special subvariety $\zg (D)$ is defined as the image
  in $\ag$ of the connected component of the set
  $\{J \in \sieg: D \subseteq\End_\Q(A_J)\}$ that contains $J_0$.  By
  definition $\zg(D)$ is irreducible.
\end{say}

If $G\subseteq\Sp(2g,\Z)$ is a finite subgroup, denote by $\sieg^G$
the set of points of $\sieg$ that are fixed by $G$.  Set
\begin{gather}
  D_G:=\{ f\in \End_\Q (\Q^{2g}) : Jf=fJ, \ \forall J \in \sieg^G\}.
  \label{def-DG}
\end{gather}
In the following statement we summarize what is needed in the rest of
the paper regarding special subvarieties. See \cite[\S 3]{fgp} for the
proofs.
\begin{TEO}
  \label{bert}
  The subset $\sieg^G$ is a connected complex submanifold of $\sieg$.
  The image of $\sieg^G$ in $\ag$ coincides with the PEL subvariety
  $\zg (D_G)$.  If $J \in \sieg^G $, then
  $ \dim \zg(D_G) = \dim (S^2 \R^{2g})^G$ where $\R^{2g}$ is endowed
  with the complex structure $J$.
\end{TEO}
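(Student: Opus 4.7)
The plan is to establish the three assertions of the theorem in order, by exploiting the combination of Riemannian, complex and Lie-theoretic structures on $\sieg$.

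\textbf{Step 1: $\sieg^G$ is a non-empty, connected complex submanifold.} First, $\sieg^G$ is non-empty because any finite subgroup of $\Sp(2g,\R)$ is contained in a conjugate of the maximal compact $U(g)$, which is the stabilizer in $\Sp(2g,\R)$ of some point of $\sieg$. To see that the fixed set is a complex submanifold, note that $G$ acts by biholomorphisms of $\sieg$; at any fixed point $J_0$, averaging an arbitrary holomorphic chart over the finite group $G$ produces a $G$-equivariant holomorphic chart in which $\sieg^G$ is locally the fixed subspace of the complex-linear $G$-representation on $T_{J_0}\sieg$. For connectedness, I would use that $\sieg$ is a Hadamard manifold (complete, simply connected, of non-positive sectional curvature in its $\Sp(2g,\R)$-invariant Riemannian metric). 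On such a manifold the unique geodesic joining two points depends $G$-equivariantly on them, so any two fixed points of the isometric $G$-action are joined by a geodesic fixed pointwise by $G$; in particular $\sieg^G$ is geodesically convex, and hence connected.

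\textbf{Step 2: identifying $\zg(D_G)$.} Next I would prove the set equality
\begin{gather*}
\{J\in\sieg: D_G\subseteq\End_\Q(A_J)\}=\sieg^G.
\end{gather*}
The inclusion $\supseteq$ is immediate from the definition \eqref{def-DG} of $D_G$: if $J$ is fixed by $G$ then every $f\in D_G$ satisfies $Jf=fJ$, i.e. $f\in\End_\Q(A_J)$. For the reverse inclusion the key observation is that $\Q[G]\subseteq D_G$: for any $g\in G$ and any $J\in\sieg^G$ one has $gJg^{-1}=J$, hence $gJ=Jg$, so $g$ satisfies the defining property of $D_G$ in \eqref{def-DG}. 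Consequently, $D_G\subseteq\End_\Q(A_J)$ forces each $g\in G$ to commute with $J$, i.e.\ $J\in\sieg^G$. Combining this set equality with Step 1, the connected component used to define $\zg(D_G)$ is simply all of $\sieg^G$, so $\zg(D_G)$ equals its image in $\ag$.

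\textbf{Step 3: the dimension formula.} Since the projection $\sieg\to\ag$ has discrete fibres, $\dim\zg(D_G)=\dim_\Co\sieg^G=\dim_\Co(T_J\sieg)^G$ for any $J\in\sieg^G$. I would then invoke the standard identification $T_J\sieg\cong S^2 V_J$ as complex $G$-modules, where $V_J$ denotes $\R^{2g}$ endowed with the complex structure $J$. One way to establish this is via the Cartan decomposition: $\mathfrak{sp}(2g,\R)\cong S^2\R^{2g}$ by means of $E$, and conjugation by $J$ is the Cartan involution whose $(-1)$-eigenspace is $T_J\sieg$; under the $J$-decomposition of $S^2\R^{2g}\otimes\Co$ this eigenspace corresponds to the summand $S^2 V_J$. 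Since $G$ acts $\Co$-linearly on $V_J$ (because it commutes with $J$) and preserves this decomposition, taking $G$-invariants yields the desired equality $\dim\zg(D_G)=\dim(S^2\R^{2g})^G$.

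The main obstacle I anticipate is the tangent-space identification in Step 3: one must fix the correct complex structure on $S^2\R^{2g}$ (namely the one induced by $J$) and carefully match the $G$-actions on $T_J\sieg$ and on the corresponding piece of $S^2 V_J$, so that "$(S^2\R^{2g})^G$" in the theorem statement has the intended complex-linear meaning and really equals $(T_J\sieg)^G$. The other two steps are essentially formal once the CAT(0) viewpoint and the containment $\Q[G]\subseteq D_G$ are in place.
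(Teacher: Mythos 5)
The paper itself gives no proof of this theorem, deferring to \cite[\S 3]{fgp}; your argument is correct and follows essentially the same standard route as that reference (Cartan fixed-point/Hadamard convexity for non-emptiness and connectedness, Bochner linearization for the complex-submanifold property, the containment $\Q[G]\subseteq D_G$ to identify $\sieg^G$ with the PEL locus, and the Cartan-decomposition identification $T_J\sieg\cong S^2 V_J$ for the dimension count). I see no gaps.
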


\section{Special subvarieties in the unramified Prym locus}
\label{prymetale}

In this section we explain how to construct Shimura subvarieties
generically contained in the Prym locus, that is contained in
$\overline{ \pr (\RR_{g}) }$ and intersecting ${ \pr (\RR_{g}) }$.
Recall that one has
$\overline{j(\mathsf{M}_{g-1} )} \subset \overline{ \pr (\RR_{g}) }$.
In fact it is known already from the work of Wirtinger \cite{wi} (see
\cite{b} for a modern proof) that Jacobians appear as limits of
Pryms. The fiber of the extended Prym map over a generic Jacobian has
been studied in detail in \cite{ds} and
\cite{kanev-global-Torelli}. It is therefore natural to extend the
search for Shimura subvarieties contained in the Torelli locus to the
case of the Prym locus and to ask whether such Shimura subvarieties
exist in high dimension.

For any integer $r \geq 3$ let $\Ga_r$ denote the group with
presentation $\Ga_r=\sx \ga_1, \lds, \ga_r | \ga_1\cds \ga_r =1\xs$.
A \emph{datum} is a pair $(G, \theta)$ where $G$ is a finite group and
$\theta : \Ga_r \lra G$ is an epimorphism. We will only be concerned
with the case $r=4$. If a datum $(G,\theta)$ is fixed, we set
$\mm :=(m_1, \lds, m_r)$ where $m_i $ is the order of
$ (\theta(\ga_i))$. We sometimes stress the importance of the vector
$\mm$ denoting a datum by $\datum$. (In fact this is important in the
\verb|MAGMA| script, which starts out by computing the possible
vectors $\mm$ that satisfy the Riemann-Hurwitz formula. So in the
computation the vector $\mm$ really comes before $(G,\theta)$.)

Denote by $\T_{0,r}$ the Teichm\"uller space in genus $0$ and with
$r\geq 4$ marked points.  The definition of $\T_{0,r}$ is as follows.
Fix $r+1$ distinct points $p_0, \lds, p_r$ on $S^2$.  For simplicity
set $P = (p_1, \lds, p_r)$.  Consider triples of the form
$(C, x, [f])$ where $C$ is a curve of genus 0, $x = (x_1, \lds, x_r) $
is an $r$-tuple of distinct points in $C$ and $[f]$ is an isotopy
class of orientation preserving homeomorphisms
$f : (C, x) \ra (S^2 , P)$.  Two such triples $(C, x , [f])$ and
$(C', x', [f'])$ are equivalent if there is a biholomorphism
$\phi: C \ra C'$ such that $\phi(x_i) = x'_i$ for any $i$ and
$ [f] = [f'\circ \phi ]$. The Teichm\"uller space $\T_{0,r}$ is the
set of all equivalence classes, see e.g. \cite[Chap. 15]{acg2} for
more details.  Since $C$ has genus 0 we can assume that $C=\PP^1$.
Using the point $p_0 \in S^2 - P$ as base point we can fix an
isomorphism $\Ga_r \cong \pi_1(S^2 - P, p_0)$.

If a datum $(G, \theta)$ and a point $t=[\PP^1, x, [f]] \in \T_{0,r}$
are fixed, we get an epimorphism
$\pi_1 ( \PP^1 - x, f\meno(p_0) ) \cong \Ga_r \ra G$ and thus a
covering $C_t \ra \PP^1=C_t/G$ branched over $x$ with monodromy given
by this epimorphism. The curve $C_t$ is equipped with an isotopy class
of homeomorphisms to a fixed branched cover $\Sigma $ of $S^2$.  Thus
we have a map $\T_{0,r} \ra \T_g \cong \T(\Sigma)$ to the
Teichm\"uller space of $\Sigma$.  The group $G$ embeds in the mapping
class group of $\Sigma$, denoted $\Mod_g$.  This embedding depends on
$\theta$ and we denote by $\gtheta \subset \Mod_g$ its image.  It
turns out that the image of $\T_{0,r}$ in $\T_g$ is exactly the set of
fixed points $\T_g^\gtheta$ of the group $\gtheta$.  We denote this
set by $\T\datu$. It is a complex submanifold of $\T_g$.  The image of
$ \T\datu$ in the moduli space $\mathsf{M}_g$ is a $(r-3)$-dimensional
algebraic subvariety that we denote by $\mathsf{M} \datu$.  See e.g.
\cite{gavino,baffo-linceo,clp2} and \cite [Thm. 2.1]{broughton-equi}
for more details.

In the discussion above the choice of the base point $p_0$ is
irrelevant.  On the other hand the choice of the isomorphism
$\Ga_r \cong \pi_1(S^2 -P, p_0)$ does matter.  To describe this we
introduce the braid group:
\begin{gather*}
  {\bf B_r}:=\langle \tau_1, \ldots ,\tau_{r}| \, \tau_i \tau_j =
  \tau_j \tau_i \, \, \text{for} \, \, |i-j|\geq 2, \,
  \tau_{i+1}\tau_i\tau_{i+1}=\tau_i\tau_{i+1}\tau_i \rangle
\end{gather*}
There is a morphism $\phi : \braid \ra \Aut(\Ga_r)$ defined as
follows:
\begin{gather*}
  \phi( \tau_i) (\gamma_i) = \gamma_{i+1}, \quad \phi(\tau_i)
  (\gamma_{i+1}) = \gamma_{i+1} ^{-1} \gamma_i \gamma_{i+1}, \\
  \phi(\tau_i) (\gamma_j ) = \gamma_j \quad \text{for }j \neq i, i+1.
\end{gather*}
From this we get an action of $\braid$ on the set of data:
$ \tau \cd (\mm, G, \theta) : = (\tau (\mm), G, \theta \circ
\phi(\tau\meno))$,
where $\tau (\mm) $ is the permutation of $\mm$ induced by $\tau$.
Also the group $\Aut(G)$ acts on the set of data by
$\alfa \cd (\mm, G, \theta) : = (\mm, G, \alfa \circ \theta )$.  The
orbits of the $\braid \times \Aut(G)$--action are called \emph{Hurwitz
  equivalence classes} and elements in the same orbit are said to be
related by a \emph{Hurwitz move}. Data in the same orbit give rise to
distinct submanifolds of $\T_g$ which project to the same subvariety
of $\M_g$.  So the submanifold $\T\datu$ is not well-defined, but the
subvariety $\M\datu$ is well-defined.  For more details see
\cite{penegini2013surfaces,baffo-linceo,birman-braids}.

\begin{DEF}
  \label{etaledatumdef}
  A \emph{Prym datum} is triple $\x=\dat$, where $\TG$ is a finite
  group, $\ttheta : \Ga_r \ra \TG$ is an epimorphism and
  $\sigma \in Z(\TG) $ is an element of order 2, that does not lie in
  $\bigcup_{i=1}^r \sx \ttheta (\ga_i)\xs$. (Here $Z(\TG)$ denotes the
  centre of $\TG$.)
\end{DEF}
Set $G:= \TG / \sx \sigma \xs$ and denote by $\theta : \Ga_r \ra G$
the composition of $\ttheta$ with the projection $\TG \ra G$.  A Prym
datum gives rise to two submanifolds of Teichm\"uller spaces, namely
$\mathsf{T} (G, \theta) \subset \T_g$ and
$\mathsf{T} (\TG, \ttheta) \subset \T_\tg$. Both are isomorphic to
$\T_{0,r}$ as explained above.  For any $t \in \T_{0,r}$ we have a
diagram
\begin{equation*}
  \begin{tikzcd}[row sep=tiny]
    \tilde{C}_t \arrow{rr}{\pi_ t} \arrow{rd} & &  \arrow{ld } C_t   = \tilde{C}_t /\sx \sigma\xs.  \\
    & \PP^1 &
  \end{tikzcd}
\end{equation*}
Here $\tilde{C}_t \ra \PP^1$ is the $\tG$-covering corresponding to
$t\in \T_{0,r}$ and to the datum $\tdatu$. The quotient map
$\pi_t : \tC_t \ra \tC_t/\sx \sigma\xs$ is an \'etale double cover.
In fact the elements of $\TG$ that have fixed points belong to some
conjugate of some $\sx \ttheta(\ga_i)\xs$. Since $\sigma$ is central
the definition ensures that it acts freely on $\tilde{C}_t$. Finally
it is easy to check that $C_t \lra \PP^1$ is the $G$-covering
corresponding to $t \in \T_{0,r}$ and to the datum $\datu$.  Denote by
$\eta_t$ the element of $\Pic^0(C_t)$, corresponding to the covering
$\pi_t$, i.e. such that
$(\pi_{t})_*(\OO_{\tilde{C}_t} )= \OO_{C_t} \oplus \eta_t$.
Associating to $t\in \T_{0,r}$ the class of the pair $(C_t, \eta_t)$
we get a map $\T_{0,r} \lra \Rg$.  This map has discrete fibres.  We
denote by $\RR (\x)$ its image.  Hence $ \dim \RR(\x) = r-3$.  The
following diagram (where $\tj$ and $j$ denote the Torelli morphisms)
summarizes the construction.
\begin{equation}
  \label{diamma1}
  \begin{tikzcd}[row sep=small]
    \Tei_{0,r} \arrow{r}{\cong} \arrow{d}[swap]{\cong} & \Tei(\TG,\ttheta) \arrow {ld} {\cong}     \arrow{rd} \arrow{rr} &   & \M_{\tg}  \arrow {r} {\tj }   & \mathsf{A}_\tg  \\
    \Tei(G,\theta) \arrow{rd} & & \RR (\x) \arrow{r}{\pr} \arrow{ru}
    \arrow{ld} & \A_{g-1} &
    \\
    & \M_g \arrow{r}{j} &  \A_g &  & \\
    & & & &
  \end{tikzcd}
\end{equation}

Given a Prym datum $\x=\dat$ fix an element $\tilde{C}_t$ of the
family $\Tei(\TG, \ttheta)$ with corresponding \'etale covering
$\pi_t: \tilde{C}_t \lra C_t$. For simplicty we drop the index $t$.
Set
\begin{gather*}
  V: = H^0(\tilde{C}, K_{\tilde{C}}),
\end{gather*}
and let $V= V_+ \oplus V_-$ be the eigenspace decomposition for the
action of $\sigma$.  The factor $V_+$ is isomorphic as a
$G$-representation to $H^0(C, K_C)$, while $V_-$ is isomorphic to
$H^0(C,K_C\otimes \eta)$.  Set
\begin{gather*}
  W := H^0( \tC , 2K_\tC ) ,
\end{gather*}
and let $W=W_+\oplus W_-$ be the eigenspace decomposition for the
action of $\sigma$.  We have $W_+ \cong H^0(C,2K_C)$ and
$W_- \cong H^0(C, 2K_C\otimes \eta)$ as $G$-representations.  The
multiplication map
\begin{gather*}
  m : S^2 V \lra W
\end{gather*}
is $\tG$-equivariant and is the codifferential of the Torelli map
$\tj : \M_\tg \ra \A_\tg$ at $[\tC] \in \M_\tg$.  We have the
following isomorphisms
\begin{gather*}
  (S^2V)^\tG = (S^2V_+)^G \oplus (S^2 V_-)^G,\qquad W^\tG = W_+ ^G.
\end{gather*}
Therefore $m$ maps $(S^2V)^\tG $ to $W_+^G$.  We are interested in the
restriction of $m$ to $ (S^2 V_-)^G $ that for simplicity we denote by
the same symbol:
\begin{gather}
  \label{nostram}
  m : (S^2 V_{-,t})^G \lra W_{+,t}^G.
\end{gather}
By the above discussion this is just the multiplication map
\begin{gather*}
  (S^2 H^0 (C, K_C\otimes \eta))^G \lra H^0(C, 2K_C)^G.
\end{gather*}

\begin{TEO}
  \label{teo1}
  Let $\x=\dat$ be a Prym datum. If there is $t\in \T_{0,r}$ such that
  the map $m$ in \eqref{nostram} is an isomorphism, then the closure
  of $\pr ( \RR(\x) )$ in $\A_{g-1}$ is a special subvariety of
  dimension $r-3$.
\end{TEO}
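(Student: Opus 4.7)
The plan is to show that the closure of $\pr(\RR(\x))$ in $\A_{g-1}$ coincides with the PEL subvariety $\zg(D_{\TG})$ produced by Theorem~\ref{bert}, where $\TG$ is viewed as a finite subgroup of $\Sp(2(g-1),\Z)$ via its action on $H^1(P(\tC_{t_0},C_{t_0}),\Z)$ at a chosen basepoint $t_0\in\T_{0,r}$.

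First I would observe that for every $t\in\T_{0,r}$ the group $\TG$ acts on $\tC_t$, and since $\sigma$ is central it descends to an action on $P(\tC_t,C_t)$ preserving the principal polarisation. Fixing a symplectic trivialisation of $H^1(P(\tC_{t_0},C_{t_0}),\Z)$ yields an embedding $\TG\hookrightarrow\Sp(2(g-1),\Z)$, and the period point of every $P(\tC_t,C_t)$ then lies in $\sieg_{g-1}^{\TG}$. By Theorem~\ref{bert}, the image of $\sieg_{g-1}^{\TG}$ in $\A_{g-1}$ coincides with $\zg(D_{\TG})$, which is an irreducible special subvariety. This gives the inclusion $\pr(\RR(\x))\subseteq\zg(D_{\TG})$.

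Next I would compare dimensions. Theorem~\ref{bert} gives $\dim\zg(D_{\TG})=\dim (S^2V_-)^{\TG}$, where $V_-\cong H^{1,0}(P)$ as a complex $\TG$-module; since $\sigma$ acts as $-1$ on $V_-$, it acts trivially on $S^2V_-$, hence $(S^2V_-)^{\TG}=(S^2V_-)^G$. By construction $\dim\RR(\x)=r-3$, while $\dim W_+^G=\dim H^0(C,2K_C)^G=r-3$ (for example by identifying $G$-invariant quadratic differentials on $C$ with quadratic differentials on $\PP^1$ with prescribed poles at the branch points). If $m$ is an isomorphism, all four quantities agree and equal $r-3$.

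Finally I would show that $\pr|_{\RR(\x)}:\RR(\x)\to\zg(D_{\TG})$ is dominant by computing its codifferential at $[C,\eta]$. The codifferential of the ambient Prym map $\pr:\Rg\to\A_{g-1}$ at $[C,\eta]$ is the $\TG$-equivariant multiplication $m_P:S^2V_-\to H^0(C,2K_C)$. Identifying the cotangent of $\zg(D_{\TG})$ at $P$ with $(S^2V_-)^G$ (via Theorem~\ref{bert}) and the cotangent of $\RR(\x)$ at $[C,\eta]$ with $H^0(C,2K_C)^G$ (via Serre duality, since deformations inside $\RR(\x)$ are the $G$-invariant deformations), one checks that the codifferential of the restricted Prym map is exactly the map $m$ of the statement. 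If $m$ is an isomorphism, then $\pr|_{\RR(\x)}$ is \'etale at $[C,\eta]$, hence dominant; together with the dimension equality and the irreducibility of $\zg(D_{\TG})$ this yields $\overline{\pr(\RR(\x))}=\zg(D_{\TG})$, a special subvariety of dimension $r-3$. The main technical point is this last identification, which requires matching Serre duality, the $\TG$-equivariance of $m_P$, and the description of the cotangent space of $\zg(D_{\TG})$ provided by Theorem~\ref{bert}.
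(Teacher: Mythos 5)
Your proposal is correct and follows essentially the same route as the paper: inclusion of the family of Pryms into the PEL subvariety $\zg(D_{G'})$ via the $G$-invariance of the period point (Theorem~\ref{bert}), followed by the identification of the codifferential of the (restricted) Prym map with the multiplication map $m$ and a dimension count using the irreducibility of the PEL subvariety. The paper phrases the last step through the period map $f:\T_{0,r}\to\HH_{g-1}^{G'}$ being an immersion rather than through $\pr|_{\RR(\x)}$ being \'etale, but these are equivalent since the vertical maps in the relevant diagram have discrete fibres.
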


\begin{proof}
  Over $\Tei_{0,r}$ we have the families $\tC_t$, $C_t$,
  $\pi_t: \tC_t \ra C_t$ and $(C_t, \eta_t)$ as in diagram
  \eqref{diamma1}.  The lattice $H_1(\tilde{C}_t, \Z)$ is independent
  of $t\in \T_{0,r}$.  Set $\La : = H_1(\tilde{C}_t, \Z) _- $.  Call
  $Q$ the intersection form on $H_1(\tilde{C}_t, \Z)$, i.e. the
  principal polarization on the Jacobian of $\tC$.  Also $Q$ is
  independent of $t$.  Set
  \begin{gather*}
    E:=(1/2)\cd Q \restr{\La}.
  \end{gather*}
  $E$ is an integral symplectic form on $\La$.  Let $\HH_{g-1} $ be
  the Siegel upper half-space that parametrizes complex structures on
  $\La \otimes \R = H_1(\tilde{C}_t, \R)_-$ that are compatible with
  $E$.  For $t\in \T_{0,r}$ we have
  $H^1(\tilde{C}_t, \Co) = V_t \oplus \overline{V_t}$ with
  $V_t = H^0(\tilde{C}_t, K_{\tilde{C}_t})$ and also
  $ H^1(\tilde{C}_t, \Co)_- = V_{-,t} \oplus \overline{V_{-,t}}$.
  Dualizing we get the decomposition
  \begin{gather*}
    H_1(\tilde{C}_t, \Co)_- = V^*_{-,t} \oplus \overline{V^*_{-,t}}.
  \end{gather*}
  This decomposition corresponds to a complex structure $J_t$ on
  $H_1(\tilde{C}_t, \R)_-$, that is compatible with $E$ and therefore
  represents a point of $\HH_{g-1}$, that we denote by $f(t)$.  We
  have thus defined a map $ f: \T_{0,r} \ra \HH_{g-1}$.  The point is
  that the following diagram commutes:
  \begin{equation*}
    \label{diamma2}
    \begin{tikzcd}[row sep=small]
      \Tei_{0,r} \arrow{r}{f} \arrow{d} & \HH_{g-1}
      \arrow{d}  \\
      \RR(\x) \subset \Rg \arrow{r}{\pr} & \A_{g-1}.
    \end{tikzcd}
  \end{equation*}
  To check this it is enough to recall that
  \begin{gather*}
    P(C_t, \eta_t) = { V_{-,t}^* }/ { \La},
  \end{gather*}
  (see e.g. \cite[p. 295ff]{ACGH} or
  \cite[p. 374ff]{lange-birkenhake}).  Since $\TG$ preserves $Q$, $G$
  preserves $E$, so $G$ maps into $\Sp(\La, E)$.  Denote by $G'$ the
  image of $G$ in $\Sp(\La, E)$.  The complex structure $J_t$ is
  $G$-invariant, i.e. $f(t)=J_t \in \HH_{g-1}^{G'}$. Hence by Theorem
  \ref{bert} $ P (C_t,\eta_t) $ lies in the PEL special subvariety
  $\zg(D_{G'})$.  Therefore $\pr (\RR (\x) ) \subset \zg(D_{G'})$.
  Since $f (\T_{0,r}) \subset \HH_{g-1}^{G'}$ we can consider $f$ as a
  map $f: \T_{0,r} \ra \HH_{g-1}^{G'}$.  Recall that
  \begin{gather*}
    \Omega^1_{f(t)} \HH^{G'}_{g-1} \cong
    (S^2 H^0 (C_t, K_{C_t}\otimes \eta_t))^G =  (S^2 V_{-,t})^G,\\
    \Omega^1 _t \Tei_{0,r} \cong \Omega^1 _{[C_t]} \Tei (G, \theta)
    \cong H^0(C_t, 2K_{C_t})^G = W_{t,+}^G.
  \end{gather*}
  The codifferential is simply the multiplication map (see
  \cite{beau77} Prop. 7.5)
  \begin{gather*}
    m = (df_t)^* : (S^2 V_{-,t})^G \lra W_{t,+}^G.
  \end{gather*}
  This follows from the fact that the codifferential of the Torelli
  map restricted to $\T_{0,r}$ is the full multiplication map
  $S^2 V \ra W$.  By assumption there is a point $t\in \T_{0,r}$ such
  that the map $m$ is an isomorphism at $t$.  This implies first of
  all that $ \dim (S^2 V_{-,t})^G = \dim W_{t,+}^G = r-3$.  Moreover
  $f$ is an immersion at point $t$, hence its image has dimension
  $r-3$.  As the vertical arrows in \eqref{diamma1} are discrete maps,
  both $\pr (\RR(\x))$ and $\zg(D_{G'}) $ have dimension $r-3$.  Since
  $\pr(\RR(\x) ) \subset \zg(D_{G'})$ and $ \zg(D_{G'})$ is
  irreducible we conclude that
  $\overline{\pr( \RR(\x)) } = \zg(D_{G'})$ as desired.
\end{proof}

The Shimura subvarieties constructed using Theorem \ref{teo1}
intersect the Prym locus and are contained in its closure.

We wish to apply Theorem \ref{teo1} to construct examples of
1-dimensional special subvarieties (i.e. \emph{Shimura curves}) in
$\A_{g-1}$.  So from now on we assume $r=4$.

In the case $r=4$ the sufficient condition in Theorem \ref{teo1}
(namely that $m$ be an isomorphism) can be split in two parts:
\begin{gather}
  \label{condA}
  \tag{A} \dim (S^2V_-)^\TG = 1.  \\
  \label{condB}
  \tag{B} m : (S^2V_-)^\TG \lra W_+^G \text { is not identically } 0.
\end{gather}
Once \eqref{condA} is known, a sufficient condition ensuring
\eqref{condB} is the following
\begin{gather}
  \label{condB1}
  \tag{B1} (S^2V_-)^\TG \text{ is generated by a decomposable tensor}.
\end{gather}
In fact if $(S^2V_-)^\TG$ is generated by $
s_1 \otimes s_2 $
with $s_i \in V_-$, then $m(s_1 \otimes s_2) = s_1 \cd s_2$ which
cannot vanish identically.

\begin{remark}
\label{condizioneB1}
  We claim if \eqref{condA} holds, then \eqref{condB1} is equivalent
  to the fact that $(S^2 V_-)^{\tG} = W_1 \otimes W_2$ with $W_i$
  1-dimensional representations.  In one direction this is obvious. In
  the opposite direction, assume that \eqref{condA} and \eqref{condB1}
  hold. Let $V_- = W_1 \oplus \cds \oplus W_k$ be a decomposition in
  irreducible representations.  Then
  \begin{gather*}
    (S^2V_-)^\tG = \bigoplus_{i=1}^ k (S^2 W_i)^\tG \oplus
    \bigoplus_{i < j} (W_i \otimes W_j)^\tG.
  \end{gather*}
  Since $ (S^2V_-)^\tG$ is 1-dimensional, there are two cases: either
  $ (S^2V_-)^\tG = (S^2 W_i)^\tG$ for some $i$ or
  $ (S^2V_-)^\tG = (W_i \otimes W_j)^\tG$ for some $i$ and some $j$.
  We treat the first case, the other being identical. Let
  $t \in (S^2V_-)^\tG = (S^2 W_i)^\tG$ be a generator. By Schur lemma
  this represents an isomorphism $t : W_i^* \ra W_i$. If $d=\dim W_i$,
  then $t$ has rank $d$. By \eqref{condB1} $t$ is decomposable hence
  $d=1$, therefore $ (S^2V_-)^\tG = W_i\otimes W_i$.
\end{remark}

\begin{remark}
\label{condabelian}
By Remark \ref{condizioneB1}, if $\tilde{G}$ is abelian and condition \eqref{condA}  holds, then  condition  \eqref{condB1}
is automatically satisfied, since all the irreducible representations of an abelian group are 1-dimensional.  
\end{remark}

Finally we have to check which of the families satisfying conditions
\eqref{condA} and \eqref{condB} are generically contained in the Prym
locus, that is they are also generically irreducible.

Let us now recall the criterion given in \cite[p. 344]{mumford}.
Given an \'etale double covering
$\tilde{C} \ra C = \tilde{C}/\langle \sigma \rangle$, the associated
Prym variety $P(\tilde{C}, C)$ is reducible if and only if the curve
$C$ is hyperelliptic and denoting by $h$ a lift of the hyperelliptic
involution to $\tC$, we have $g(\tC/\langle h \rangle) >0$ and
$g(\tC/\langle h\sigma \rangle) >0$. In this case $P(\tC,C)$ is the
product
$J(\tC/\langle h \rangle) \times J(\tC/\langle h\sigma \rangle)$ as
principally polarised abelian variety.

\begin{LEM}
  \label{redu}
  Fix a Prym datum $\x=\dat$ satisfying \eqref{condA} and
  \eqref{condB}. If the generic Prym of the family is reducible, there
  exists a Prym datum with group $\tilde{H}$ containing $\tG$ also
  satisfying \eqref{condA} and \eqref{condB}, with a subgroup
  $\Z/2 \times \Z/2 \cong \langle h, \sigma \rangle \subset \tilde{H}$
  such that $\tC/\langle h, \sigma \rangle \cong \proj^1$.
\end{LEM}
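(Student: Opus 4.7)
The plan is to apply the reducibility criterion of Mumford recalled just before the lemma, and to take for $\tilde H$ the subgroup of $\Aut(\tC)$ generated by $\tG$ and a lift of the hyperelliptic involution.

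First, by Mumford's criterion, for a generic (hence every) $t\in\T_{0,4}$ the curve $C_t$ is hyperelliptic and its hyperelliptic involution $\iota$ admits a lift $h\in\Aut(\tC_t)$ which is itself an involution and such that $g(\tC_t/\langle h\rangle)>0$ and $g(\tC_t/\langle h\sigma\rangle)>0$. Because the hyperelliptic involution of a hyperelliptic curve of genus $\geq 2$ is unique, $\iota$ lies in the centre of $\Aut(C_t)$, and in particular commutes with the image of $G$. Lifting this relation to $\tC_t$: for every $\tilde g\in\tG$ the conjugate $\tilde g h \tilde g^{-1}$ is an involution projecting to $\iota$, hence equals $h$ or $h\sigma$. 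Therefore $h$ normalises $\tG$, and $\tilde H:=\langle\tG,h\rangle$ is a finite subgroup of $\Aut(\tC_t)$ containing $\tG$ as a subgroup of index at most $2$, with $\sigma$ still central. Since $h$ and $\sigma$ are commuting distinct involutions, $\langle h,\sigma\rangle\cong\Z/2\times\Z/2$ and $\tC_t/\langle h,\sigma\rangle=C_t/\langle\iota\rangle\cong\PP^1$.

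Next I produce the new Prym datum. The quotient $\tC_t/\tilde H$ is a further quotient of $\tC_t/\tG\cong\PP^1$ by $\tilde H/\tG$, hence again isomorphic to $\PP^1$. A direct count of the branching of the composition $\tC_t\to\tC_t/\tG\to\tC_t/\tilde H$, namely the two fixed points of the induced involution $\bar\iota$ on $\tC_t/\tG$ together with the two orbits into which $\bar\iota$ groups the four original branch points, shows that there are exactly four branch points, giving a surface-kernel epimorphism $\tilde\theta':\Ga_4\to\tilde H$ and a one-parameter family $\RR(\tilde H,\tilde\theta',\sigma)$ containing $\RR(\x)$.

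The main obstacle is the verification of \eqref{condA} and \eqref{condB} for the enlarged datum. Since $\tilde H\supseteq\tG$, the space $(S^2V_-)^{\tilde H}$ sits inside the one-dimensional space $(S^2V_-)^{\tG}$, on which the image of $h$ acts by $\pm 1$. The reducibility of the Prym yields the $h$-isotypic decomposition $V_-=V_-^+\oplus V_-^-$, with $V_-^\pm$ corresponding respectively to the holomorphic differentials on $\tC_t/\langle h\rangle$ and on $\tC_t/\langle h\sigma\rangle$, and consequently $S^2V_-=S^2V_-^+\oplus S^2V_-^-\oplus(V_-^+\otimes V_-^-)$, with $h$ acting by $+1$ on the first two summands and by $-1$ on the third. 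The heart of the argument is to check that the $\tG$-invariant generator of $(S^2V_-)^{\tG}$ lies in $S^2V_-^+\oplus S^2V_-^-$, so that $h$ acts trivially and \eqref{condA} holds for $\tilde H$; this is where the centrality of $\iota$ in $\Aut(C_t)$ is used in an essential way. Once \eqref{condA} is established, $(S^2V_-)^{\tilde H}=(S^2V_-)^{\tG}$, and \eqref{condB} for $\tilde H$ follows at once from \eqref{condB} for $\x$.
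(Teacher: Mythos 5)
Your overall route is the paper's: apply Mumford's criterion, set $\tilde H=\langle \tG,h\rangle$, and verify \eqref{condA} and \eqref{condB} for the enlarged datum. The preliminary steps are correct --- the uniqueness of the hyperelliptic involution, the fact that $h$ normalises $\tG$ and therefore acts by a sign on the one-dimensional space $(S^2V_-)^{\tG}$, and the identification of the $h$-eigenspaces of $V_-$ with $H^0(K_{\tC/\sx h\xs})$ and $H^0(K_{\tC/\sx h\sigma\xs})$. But the decisive step is missing. You reduce the lemma to showing that the generator of $(S^2V_-)^{\tG}$ lies in $S^2V_-^+\oplus S^2V_-^-$ rather than in $V_-^+\otimes V_-^-$, and then you only announce that ``this is where the centrality of $\iota$ is used in an essential way''. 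Centrality of $\iota$ gives you nothing beyond what you have already extracted from it, namely that $h$ acts on $(S^2V_-)^{\tG}$ by $\pm1$; it does not determine the sign, and a priori the generator could perfectly well sit in $V_-^+\otimes V_-^-$. So the heart of the proof is asserted, not proved.

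The paper closes exactly this gap by an equivariance argument that does not analyse $S^2V_-$ directly but uses the multiplication map and, crucially, condition \eqref{condB}. Since the generic curve of the one-dimensional family carries the $\tilde H$-action, the fixed loci $\T(\tG)$ and $\T(\tilde H)$ in $\T_{\tg}$ coincide; hence $h$ acts trivially on the one-dimensional cotangent space $H^0(2K_{\tC})^{\tG}=H^0(2K_{\tC})^{\tilde H}$. The multiplication map $m:(S^2V_-)^{\tG}\ra H^0(2K_{\tC})^{\tG}$ is $\tilde H$-equivariant and, by \eqref{condA} and \eqref{condB}, an isomorphism of one-dimensional spaces; an equivariant isomorphism onto a trivial module forces the source to be a trivial $\tilde H$-module. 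This yields $(S^2V_-)^{\tilde H}=(S^2V_-)^{\tG}$ and hence \eqref{condA} and \eqref{condB} for $\tilde H$. Note that \eqref{condB} is an input to establishing \eqref{condA} for $\tilde H$, which is why a purely representation-theoretic analysis of $S^2V_-$, as you propose, cannot succeed on its own. (The same observation $\T(\tG)=\T(\tilde H)$ also gives the cleanest justification that the new datum has four branch points; your ``direct count'' tacitly assumes the induced involution of $\PP^1$ acts freely on the four original branch points.)
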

\proof If the generic Prym $P(\tC,C)$ is reducible, by the above
criterion there exists a lifting $h$ of the hyperelliptic involution
of $C$ such that $\langle h, \sigma \rangle\cong\Z/2 \times \Z/2$ and
$\tC/\langle h, \sigma \rangle \cong \proj^1$.

Set $\tilde{H}:= \langle \tG, h \rangle$. If $\tG= \tilde{H}$ we are
done. If $\tG \subsetneq \tilde{H}$, the fixed point loci $\T(\tG)$
and $\T(\tilde{H})$ of the actions on the Teichm\"uller space
$\T_{\tg}$ coincide.

Clearly
$(S^2(H^0(K_{\tC}))^{\tilde{H}} \subset
(S^2(H^0(K_{\tC}))^{\tilde{G}}$. The multiplication map
\begin{gather*}
 (S^2(H^0(K_{\tC}))^{\tilde{G}} \ra H^0(2K_{\tC})^{\tG} = H^0(2K_{\tC})^{\tilde{H}} 
\end{gather*}
is an isomorphism of one dimensional vector spaces which is
$\tilde{H}$ equivariant. Hence also the multiplication map
$ (S^2(H^0(K_{\tC}))^{\tilde{H}} \ra H^0(2K_{\tC})^{\tilde{H} }$ is an
isomorphism.  This shows that $\tilde{H}$ defines a new Prym datum
satisfying \eqref{condA} and \eqref{condB} yielding the same family as
the one given by $\x=\dat$.  \qed

\section{Special subvarieties in the ramified Prym locus}
\label{sec:ramified}

In this section we would like to repeat the construction of the
previous section in the case in which the double covering
$\pi_t: {\tC}_t \to C_t$ is ramified at two points. This is the only
other case in which the associated Prym variety is principally
polarised \cite{mumford,lange-birkenhake}.

Let $C$ be a curve, $\eta$ a line bundle on $C$ of degree 1 and $B$ a
reduced divisor in the linear system $|{\eta^2}|$, i.e. $B=p+q$ with
$p\neq q$.  From this data one gets a double cover
$\pi:\tC \rightarrow C$ ramified over $B$.  The \emph{Prym variety}
$P(\tC,C)$ of $\pi$ is defined as the kernel of the norm map, which in
this case is connected.  As in the unramified case, the polarization
of $J(\tC)$ restricts to the double of a principal polarization $E$ on
$P(\tC,C)$. We will always consider $P(\tC,C)$ with the principal
polarizaztion $E$. In the case at hand it has dimension $g$.

Let $\RR_{g,[2]}$ denote the scheme parametrizing triples
$[C, \eta, B]$ up to isomorphism; the \emph{Prym map} is the morphism
\[ {\pr}: \RR_{g,[2]} \ra \ag
\]
which associates to $[C, \eta, B]$ the Prym variety $P(\tC,C)$ of
$\pi$.

We recall that we have the following inclusions
$\overline{j(\mathsf{M}_g )} \subset \overline{ \pr (\RR_{g,[2]}) }
\subset \overline{ \pr (\RR_{g+1}) }$.
Roughly the inclusion
$\overline{ \pr (\RR_{g,[2]}) } \subset \overline{ \pr (\RR_{g+1}) }$
can be seen as follows: given a double covering of a smooth curve of
genus $g$ ramified at two points, we obtain an admissible Beauville
covering gluing the two branch points and the corresponding
ramification points (see \cite{fl} p.763).

The inclusion
$\overline{j(\mathsf{M}_g )} \subset \overline{ \pr (\RR_{g,[2]}) }$
can be seen as follows: take a smooth genus $g$ curve $C$. Consider
the 2-pointed 1-nodal curve $X= C \cup \proj^1 $ where $C$ and
$\proj^1$ meet transversally at a point $x$ and let $p,q$ the two
marked points in $\proj^1$. Consider the admissible ramified double
cover $\tilde{X}$ of $X$ costructed as follows. Take the double cover
$f: \proj^1 \to \proj^1$ ramified in $p,q$ and denote by
$\{p_1,p_2\} = f^{-1}(x) \subset \proj^1$. Take two copies $C_1$,
$C_2$ of $C$, and glue these curves with $\proj^1$ identifying the
points $x \in C_i $ with $p_i$. Clearly the Prym $P(\tilde{X}, X)$ is
the Jacobian of $C$.

Thus it is again natural to extend the search for Shimura varieties in
the Torelli locus to the ramified Prym locus and the question about
the existence of such Shimura subvarieties in high dimension.

\begin{DEF}
  \label{ramiprym}
  A \emph{ramified Prym datum} is triple $\y=\dat$, where $\TG$ is a
  finite group, $\ttheta : \Ga_r \ra \TG$ is an epimorphism and
  $\sigma \in Z(\TG) $ is an element of order 2, that satisfies one of
  the following two conditions:
  \begin{enumerate}
  \item there is one and only one index $i$ such that
    $\sigma \in \sx \ttheta(\ga_i)\xs$ and $m_i = |\tG|/2$;
  \item there are exactly two indices $i, j$ such that
    $\sigma \in \sx \ttheta(\ga_i)\xs$,
    $\sigma \in \sx \ttheta(\ga_j)\xs$ and $m_j=m_i = |\tG|$.
  \end{enumerate}
  ($Z(\TG)$ denotes the centre of $\TG$.)
\end{DEF}
We set $G:= \TG / \sx \sigma \xs$ and we denote by
$\theta : \Ga_r \ra G$ the composition of $\ttheta$ with the
projection $\TG \ra G$.  The ramified Prym datum gives rise to two
submanifolds of Teichm\"uller spaces, namely
$\mathsf{T} (G, \theta) \subset \T_g$ and
$\mathsf{T} (\TG, \ttheta) \subset \T_\tg$. Both are isomorphic to
$\T_{0,r}$.  For any $t \in \T_{0,r}$ we have a diagram
\begin{equation*}
  \begin{tikzcd}[row sep=tiny]
    \tilde{C}_t \arrow{rr}{\pi_ t} \arrow{rd}[swap]{\psi} & &  \arrow{ld } C_t   = \tilde{C}_t /\sx \sigma\xs.  \\
    & \PP^1 &
  \end{tikzcd}
\end{equation*}
Here $\tilde{C}_t \ra \PP^1$ is the $\tG$-covering corresponding to
$t\in \T_{0,r}$ and to the datum $\tdatu$, while $C_t \ra \PP^1$ is
the $G$-covering corresponding to $\datu$. The quotient map
$\pi_t : \tC_t \ra \tC_t/\sx \sigma\xs$ has exactly two ramification
points.  To check this let $\{t_1,\lds,t_4\}$ be the critical values
of $\psi$.  If $\y $ satisfies condition (1) in Definition
\ref{ramiprym}, the two critical points of $\pi_t$ belong to the fibre
$\psi^{-1}(t_i)$ and thus $m_i = |\tG|/2$.  If $\y$ satisfies
condition (2) one critical point of $\pi_t$ is in $\psi^{-1}(t_i)$ and
the other is in $\psi^{-1}(t_j)$ and thus $m_i=m_j = |\tG|$.  Note
that $\tilde{g} = 2g$.

Denote by $\eta_t$ the element of $\Pic^0(C_t)$, corresponding to the
covering $\pi_t$, so that
$(\pi_{t})_*(\OO_{\tilde{C}_t} )= \OO_{C_t} \oplus \eta\meno_t$.  Let
$B_t \in |\eta_t^2|$ be the branch divisor of $\pi_t$.  Associating to
$t\in \T_{0,r}$ the class of the triple $(C_t, \eta_t, B_t)$ we get a
map with discrete fibres $\T_{0,r} \lra \rrd$.  Its image, denoted
$\RRd (\y)$, is $ (r-3)$-dimensional.  The following diagram
summarizes the construction.
\begin{equation}
  \label{diamma3}
  \begin{tikzcd}[row sep=small]
    \Tei_{0,r} \arrow{r}{\cong} \arrow{d}[swap]{\cong} & \Tei(\TG,\ttheta) \arrow {ld} {\cong}     \arrow{rd} \arrow{rr} &   & \M_{\tg}  \arrow {r} {\tj }   & \mathsf{A}_\tg  \\
    \Tei(G,\theta) \arrow{rd} & & \RRd (\x) \arrow{r}{\pr} \arrow{ru}
    \arrow{ld} & \A_{g} &
    \\
    & \M_g \arrow{r}{j} &  \A_g &  & \\
    & & & &
  \end{tikzcd}
\end{equation}

Given a ramified Prym datum $\dat$ and a covering
$\pi: \tilde{C} \lra C$ of the family, we have the eigenspace
decomposition for $\sigma$ just as in unramified case:
$V: = H^0(\tilde{C}, K_{\tilde{C}})= V_+ \oplus V_-$.  This time
$V_+\cong H^0(C, K_C)$ and $V_-\cong H^0(C,K_C\otimes \eta)$ as
$G$-modules.  Similarly $ W: = H^0( \tC , 2K_\tC )= W_+\oplus W_-$,
$W_+ \cong H^0(2K_C \otimes \eta^2) = H^0(2K_C +B)$ and
$W_- \cong H^0(C, 2K_C\otimes \eta)$.  The multiplication map
$ m : S^2 V \lra W $ is the codifferential of the Torelli map
$\tj : \M_\tg \ra \A_\tg$ at $[\tC] \in \M_\tg$.  It is
$\tG$-equivariant.  We have the following isomorphisms
\begin{gather*}
  (S^2V)^\tG = (S^2V_+)^G \oplus (S^2 V_-)^G,\\
  W^\tG = W_+ ^G.
\end{gather*}
Therefore $m$ maps $(S^2V)^\tG $ to $W_+^G$.  We are interested in the
restriction of $m$ to $ (S^2 V_-)^G $ that for simplicity we denote by
the same symbol:
\begin{gather}
  \label{nostramram}
  m : (S^2 V_-)^G \lra W_+^G.
\end{gather}
By the above discussion this is just the multiplication map
\begin{gather*}
  (S^2 H^0 (C, K_C\otimes \eta))^G \lra H^0(C, 2K_C \otimes \eta^2)^G
  \cong H^0(2K_C)^G \cong H^0(2K_{\tC})^{\tG}.
\end{gather*}

\begin{TEO}
  \label{teo1ram}
  Let $\y=\dat$ be a ramified Prym datum. If for some $t\in \T_{0,r}$
  the map $m$ in \eqref{nostramram} is an isomorphism, then the
  closure of $\pr ( \RR_{[2]} \y )$ in $\A_{g}$ is a special
  subvariety.
\end{TEO}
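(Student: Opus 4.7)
The plan is to mimic the proof of Theorem \ref{teo1} essentially verbatim, taking care only of the dimension shift ($\dim P(\tC,C)=g$ now, not $g-1$) and of the identification $W_+\cong H^0(2K_C+B)\cong H^0(2K_{\tC})^\tG$ that replaces $W_+\cong H^0(2K_C)$. The ramified points play a role only in the identification of $W_+$; from the Hodge-theoretic viewpoint the Prym variety $P(\tC_t,C_t)$ is still the anti-invariant part of $J(\tC_t)$ equipped with half the restricted polarization, so the period map construction is completely parallel.

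First, I would fix the lattice $\La:=H_1(\tC_t,\Z)_-$, which is independent of $t\in\T_{0,r}$, together with the symplectic form $E:=(1/2)Q|_\La$, where $Q$ is the intersection form on $\tC_t$; this is the principal polarization on $P(\tC_t,C_t)$. For each $t$ the decomposition $H_1(\tC_t,\Co)_-=V_{-,t}^*\oplus\overline{V_{-,t}^*}$ gives a complex structure $J_t$ on $\La\otimes\R$ compatible with $E$, hence a point $f(t)\in\HH_g$, and the resulting map $f:\T_{0,r}\to\HH_g$ fits into a commutative square with the Prym map, exactly as in the proof of Theorem \ref{teo1}, using the description $P(C_t,\eta_t,B_t)=V_{-,t}^*/\La$ (see e.g. \cite{lange-birkenhake}).

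Second, the $\tG$-action preserves $Q$ and the decomposition, so $G$ maps into $\Sp(\La,E)$ with image $G'$, and $f(t)\in\HH_g^{G'}$ for every $t$. Theorem \ref{bert} then yields the inclusion $\pr(\RRd(\y))\subseteq\zg(D_{G'})$ into an irreducible PEL special subvariety of dimension $\dim(S^2V_{-,t})^G$. Under the identifications
\begin{gather*}
\Omega^1_{f(t)}\HH_g^{G'}\cong(S^2V_{-,t})^G=\bigl(S^2H^0(C_t,K_{C_t}\otimes\eta_t)\bigr)^G,\\
\Omega^1_t\T_{0,r}\cong W_{+,t}^G\cong H^0(C_t,2K_{C_t}+B_t)^G\cong H^0(\tC_t,2K_{\tC_t})^{\tG},
\end{gather*}
the codifferential $(df_t)^*$ agrees with the multiplication map $m$ of \eqref{nostramram}; this is just the $\tG$-equivariant restriction of the codifferential of the Torelli map $\tj:\M_\tg\to\A_\tg$ to the $\T_{0,r}$-direction, exactly as in \cite{beau77} Prop.~7.5.

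Third, assuming $m$ is an isomorphism at some $t_0$, this forces $\dim(S^2V_{-,t_0})^G=\dim W_{+,t_0}^G=r-3$ and makes $f$ an immersion at $t_0$. Since the vertical arrows in the diagram \eqref{diamma3} are discrete, both $\pr(\RRd(\y))$ and $\zg(D_{G'})$ have dimension $r-3$ at the image point; irreducibility of $\zg(D_{G'})$ then gives $\overline{\pr(\RRd(\y))}=\zg(D_{G'})$, which is a special subvariety.

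The only genuinely new verification compared to Theorem \ref{teo1} is the identification of $\Omega^1_t\T_{0,r}$ with $W_{+,t}^G$: one must check that the two ramification points of $\pi_t$ contribute the extra sections in $H^0(2K_C+B)$ relative to $H^0(2K_C)$ in such a way that the pullback to $\tC_t$ lands in the $\tG$-invariant quadratic differentials. This is the step I expect to require the most care; once it is in place, the rest is a transcription of the unramified argument.
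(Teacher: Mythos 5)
Your proposal follows the paper's proof of Theorem \ref{teo1ram} essentially step for step: same lattice $\La=H_1(\tC_t,\Z)_-$ with $E=(1/2)Q|_\La$, same period map $f:\T_{0,r}\to\HH_g^{G'}$, same appeal to Theorem \ref{bert} for the inclusion into $\zg(D_{G'})$, and the same dimension count via the codifferential. The one step you flag as needing care --- identifying $\Omega^1_t\T_{0,r}$ with $W_{+,t}^G\cong H^0(2K_{C_t}\otimes\eta_t^2)^G$ and the codifferential with $m$ --- is exactly where the paper departs from citing \cite{beau77} and instead invokes \cite{nagarama} Prop.~3.1 (or \cite{lange-ortega}) for the ramified Prym map; otherwise your argument is the paper's.
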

\begin{proof}
  Over $\Tei_{0,r}$ we have the families $\tC_t$, $C_t$, $ \eta_t$,
  $B_t$.  The lattice $H_1(\tilde{C}_t, \Z)$ the intersection form $Q$
  on $H_1(\tilde{C}_t, \Z)$ and the the sublattice
  $\La : = H_1(\tilde{C}_t, \Z) _- $ are independent of $t$.  Moreover
  $ E:=(1/2)\cd Q \restr{\La}$ is an integer-valued form on $\La$.
  Let $\HH_{g} $ be the Siegel upper half-space parametrizing complex
  structures on $\La \otimes \R = H_1(\tilde{C}_t, \R)_-$ that are
  compatible with $E$.  For any $t\in \T_{0,r}$ we have a
  decomposition
  $ H^1(\tilde{C}_t, \Co)_- = V_{-,t} \oplus \overline{V_{-,t}}$.
  Dualizing we get a decomposition
  $ H_1(\tilde{C}_t, \Co)_- = V^*_{-,t} \oplus \overline{V^*_{-,t}} $
  that corresponds to a complex structure $J_t$ on
  $H_1(\tilde{C}_t, \R)_-$. $J_t$ is compatible with $E$ and therefore
  represents a point of $\HH_{g}$, that we denote by $f(t)$.  We have
  thus defined a map $ f: \T_{0,r} \ra \HH_{g}$ that fits in following
  diagram:
  \begin{equation}
    \label{diamma2}
    \begin{tikzcd}[row sep=small]
      \Tei_{0,r} \arrow{r}{f} \arrow{d} & \HH_{g}
      \arrow{d}  \\
      \RRd(\y) \subset \rrd \arrow{r}{\pr} & \A_{g}.
    \end{tikzcd}
  \end{equation}
  The diagram commutes since also in this case \begin{gather*}
    P(\tC_t, C_t) = { V_{-,t}^* }/ { \La},
  \end{gather*}
  (see e.g. \cite[p. 295ff]{ACGH} or
  \cite[p. 374ff]{lange-birkenhake}). Since $\TG$ preserves $Q$, $G$
  preserves $E$, so $G$ maps into $\Sp(\La, E)$.  Denote by $G'$ the
  image of $G$ in $\Sp(\La, E)$.  The complex structure $J_t$ is
  $G$-invariant, i.e.  $f(t)=J_t \in \HH_{g}^{G'}$. Hence by Theorem
  \ref{bert} $ P (\tC_t,C_t) $ lies in the PEL special subvariety
  $\zg(D_{G'})$.  Therefore $\pr (\RR (\x) ) \subset \zg(D_{G'})$.
  Since $f (\T_{0,r}) \subset \HH_{g}^{G'}$ we can consider $f$ as a
  map $f: \T_{0,r} \ra \HH_{g}^{G'}$.  Recall that
  \begin{gather*}
    \Omega^1_{f(t)} \HH^{G'}_{g} \cong
    (S^2 H^0 (C_t, K_{C_t}\otimes \eta_t))^G  =  (S^2 V_{-,t})^G,\\
    \Omega^1 _t \Tei_{0,r} = \Omega^1 _{[C_t]} \Tei (G, \theta) =
    H^0(C_t, 2K_{C_t})^G \cong H^0(C_t, 2K_{C_t} \otimes \eta_t^2)^G =
    W_{t,+}^G.
  \end{gather*}
  The codifferential is simply the multiplication map
  \begin{gather*}
    m = (df_t)^* : (S^2 V_{-,t})^G \lra W_{t,+}^G
  \end{gather*}
  (see \cite{nagarama} Prop. 3.1, or \cite{lange-ortega}).  By
  assumption there is some $t\in \T_{0,r}$ such that the map $m$ is an
  isomorphism at $t$.  This implies first of all that
  $ \dim (S^2 V_{-,t})^G = \dim W_{t,+}^G = r-3$.  Moreover $f$ is an
  immersion at $t$, hence its image has dimension $r-3$.  As the
  vertical arrows in \eqref{diamma2} are discrete maps, both
  $\pr (\RR(\x))$ and $\zg(D_{G'}) $ have dimension $r-3$.  Since
  $\pr(\RR(\x) )\subset \zg(D_{G'})$ and $ \zg(D_{G'})$ is irreducible
  we conclude that $\overline{ \pr(\RR(\x)) } = \zg(D_{G'})$ as
  desired.
\end{proof}

The Shimura subvarieties constructed using Theorem \ref{teo1ram}
intersect the ramified Prym locus and are contained in its closure.

We wish to use Theorem \ref{teo1ram} to construct special curves. So
we set $r=4$.  Just as in the unramified case we can then split the
hypothesis of the Theorem in two conditions:
\begin{gather}
  \label{condA2}
  \tag{A} \dim (S^2V_-)^\TG = 1.
  \\
  \label{condB2}
  \tag{B} m : (S^2V_-)^\TG \lra W_+^G \text { is not identically } 0.
\end{gather}
Again once \eqref{condA2} is true, a sufficient condition ensuring
\eqref{condB2} is the following
\begin{gather}
  \label{condB3}
  \tag{B1} (S^2V_-)^\TG \text{ is generated by a decomposable tensor}.
\end{gather}

\section{Examples in the Prym locus}

In this section we discuss several examples of Shimura curves in the
Prym locus obtained using theorem \ref{teo1} and the scripts. Although
we do not study in detail all the examples gotten in this way (which
are listed in Tables 1 and 2) we give several informations for various
of them.  In particular for each example we recall the genera of $\tC$
and $C$, the group $\tG$ with a presentation and the monodromy,
i.e. the epimorphism $\ttheta$. With these data it is possible to
compute everything of the family, at least in principle, and such
presentation for all the examples of Tables 1 and 2 be found in the
lists on-line (see Appendix).

Before describing the examples, let us recall the description of two
Shimura families of Jacobians constructed in \cite{moonen-special} given by the equations:

$${\mathcal X}_t: v^2 = (u^3 +1)(u^3 + t), \quad  \quad \quad {\mathcal Y}_t: v^2 = u(u^2-1)(u^2 -t).$$
\

The first one is family (3) and the second one is family (4) in Table 1 in \cite{moonen-special}. These
two families will show up frequently in the following discussions. As observed in \cite{fgp} (see Table 1 and Table 2 in \cite{fgp}),
they have extra automorphisms: the group $D_6$ for (3)
and $D_4$ for (4), in fact (3)=(30) and (4)=(29) in the enumeration of
\cite{fgp}.  For every non-central element $a$ of order 2 in $ D_6$
and for any curve ${\mathcal X}_t$ in (3), the quotient ${\mathcal X}_t / \sx a \xs$ is
an elliptic curve $E_t$.  One easily shows that $J({\mathcal X}_t)$ is
isogenous to $E_t \times E_t$.

The same happens for (4) taking $E_t$ to be the quotient by a
non-central element of order 2 in $D_4$.  Therefore these two families
of Jacobians are both isogenous to the product of the same elliptic
curve $E \times E$ which moves.

We notice that many of the examples give rise to Pryms which are
isogenous to a product, but in dimension at least 4 they are all
irreducible. It would be interesting to study the decompotision up to
isogeny more in detail. For related questions in the case of Jacobians
see e.g. \cite {pauro}.

\begin{REM}
  \label{*}
  Notice that if one of the families of Pryms we constructed
  satisfying \eqref{condA} and \eqref{condB} is a family of Jacobians,
  it must satisfy condition $(*)$ of Theorem 3.9 in \cite{fgp}.  Hence
  if the dimension of the Pryms is $\leq 9$, they yield a Shimura
  curve that must appear in Table 2 of \cite{fgp}.
\end{REM}

\begin{LEM}
  \label{move}
  Let $\tdatu$ be a datum. Assume that for any $t\in \T_{0,r}$ there
  is a $\tG$-invariant rational Hodge substructure
  $W_t \subset H^1(P(\tC_t, C_t), \Co)$. If
  $(S^2 W^{1,0}_t)^\tG=\{0\}$, then the abelian variety corresponding
  up to isogeny to $W_t$ does not depend on $t$.
\end{LEM}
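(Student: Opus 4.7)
The plan is to realize $t \mapsto W_t$ as a polarized sub-variation of Hodge structure over $\T_{0,r}$ and to show that its associated period map is constant. First I would note that the rational local system $R^1\pi_*\Q$ attached to the family of Pryms over $\T_{0,r}$ carries a $\tG$-action, and any $\tG$-subrepresentation of this local system determines a rational sub-local system $\mathcal{W}$. Since by hypothesis each fiber $W_t$ is a rational Hodge substructure, $\mathcal{W}$ is a sub-VHS of weight one; it inherits a polarization from the principal polarization on the Pryms (possibly after passing to a finite cover, which affects neither connectedness of the base nor the isogeny class at each $t$). Moreover, the Hodge decomposition $W_t = W^{1,0}_t \oplus W^{0,1}_t$ is preserved by $\tG$ because the full Hodge decomposition on $H^1(P(\tC_t, C_t), \Co)$ is.

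Next I would consider the period map $\phi : \T_{0,r} \to D$ associated with $\mathcal{W}$, where $D$ is the period domain of polarized rational Hodge structures of type $(1,0)+(0,1)$ on a fixed rational vector space. Because $\tG$ acts preserving the polarized Hodge structure, $\phi$ is $\tG$-equivariant and factors through the fixed locus $D^{G'}$, where $G'$ denotes the image of $\tG$ in the relevant symplectic group. Exactly as in the proofs of Theorems \ref{teo1} and \ref{teo1ram}, the cotangent space to $D^{G'}$ at $\phi(t)$ is identified with $(S^2 W^{1,0}_t)^{\tG}$, and the codifferential of $\phi$ at $t$ is the natural restriction of the multiplication map on symmetric squares of Hodge classes.

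Applying the hypothesis $(S^2 W^{1,0}_t)^{\tG} = 0$ for every $t$, the codifferential of $\phi$ vanishes identically, so the differential of $\phi$ is zero at every point. Since $\T_{0,r}$ is connected, $\phi$ is constant. A constant period map means that the polarized rational Hodge structures $W_t$ are all isomorphic as $t$ varies, and therefore the associated polarized abelian varieties, defined up to isogeny by the polarized rational Hodge structure, are all isogenous to a single fixed abelian variety.

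The main subtlety will be justifying that the $\{W_t\}$ genuinely assemble into a sub-VHS rather than being a pointwise-only collection of sub-Hodge structures. This follows from the $\tG$-invariance, because any isotypic component (or more generally any choice of $\tG$-subrepresentation made compatibly with the monodromy) of a local system with a finite group action produces a sub-local system with holomorphically varying Hodge filtration; the only genuine input needed beyond this is that the fibers be sub-Hodge structures, which is exactly the hypothesis of the lemma.
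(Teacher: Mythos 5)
Your argument is correct and is essentially the paper's: both proofs come down to the observation that the period point of $W_t$ is forced into the fixed locus of $\tG$ in the relevant Siegel space, whose dimension equals $\dim (S^2 W^{1,0}_t)^{\tG}=0$ by Theorem \ref{bert}, so that locus is a single point. The paper states this directly in two lines, while you reach the same conclusion by noting the period map has vanishing differential on the connected base $\T_{0,r}$; the extra care you take in assembling the $W_t$ into a sub-VHS is a reasonable way to make the implicit continuity in the paper's argument explicit, but it is not a different method.
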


\begin{proof}
  It is enough to observe that the period matrix of the abelian
  variety corresponding up to isogeny to $W_t$ lies in
  ${\mathfrak {H}}_k^{\tG}$, where $k = \dim {W_t}^{1,0}$, and that
  $\mathfrak{H}_k^\tG$ is a point by the assumption.
\end{proof}

There are only 28 abelian examples satisfying condition \eqref{condA}, all in $\A_k$ with $k\leq 10$.
Recall that by Remark \ref{condabelian},  if the group is abelian and condition  \eqref{condA} holds, then \eqref{condB1} is also satisfied. 

Theorem \ref{teo1} tells us that these
families of Pryms yield special subvarieties of $\A_{k}$. We give here
a descriptions of the 7 examples with $k\geq 6$, for which the closure
of the Prym locus is not all $\A_k$. To verify that they are all
generically irreducible, we use Lemma \ref{redu} and the computer
check.

\subsection{The unramified abelian examples in $\A_6$ and in $\A_7$}

Note that for $k=6,7$, in the abelian examples we always have
$\TG = \Z/2 \times \Z/n$ and for these examples we give explicit
equations describing $\tC_t$ and $C_t$ as $n$-coverings of $\proj^1$,
via the quotient by $\Z/n$.

In the following $\zeta_n$ denotes a primitive $n$-th root of unity.

We denote by $\rho_n^i$ the character of $\sx h\xs=\Z/n$ mapping $h$
to $\zeta_n^i$, while $W_{\zeta_n^i }$ denotes the irreducible
representation of $\sx h\xs $ corresponding to this character,
i.e. mapping $h$ to $ \zeta_n^i$.  Since
$\sx h\xs \hookrightarrow \tG \ra G = \tG / \sx \sigma\xs$ is an
isomorphism, we consider $V_-$ as a representation of $\sx h\xs$.

\medskip\noindent \textbf{Example 29.}\\
$\tg = 13$, $g=7$,\\
$\tG =G(16,5)= \Z/2 \times \Z/8 = \langle g_2, g_1 \ | g_2^2=1, \
g_1^8 = 1, \
g_1g_2 =g_2 g_1\rangle $, $\sigma = g_1^4g_2$.\\
$ \ttheta(\ga_1) = g_1 , \quad \ttheta(\ga_2)= g_1^3, \quad \ttheta
(\ga_3) = g _1g_2, \quad \ttheta (\ga_4) = g _1^3g_2$.\\
$  \tilde{C}_t:\quad y^8=u^2(u^2-1)^7(u^2-t)^5\quad \pi:\tilde{C}\rightarrow \PP^1, \quad\pi(u,y)= u$.\\
$ g_2:(u,y)=(-u,-y),\quad  g_1(u,y)= (u,-\zeta_8y)=(u,\zeta_8^5y)\quad \sigma(u,y)=(-u,y)$.\\
$C_t: \quad
y^8=x(x-1)^7(x-t^2)^5\quad (x,y)=(u^2,y)$.\\
$V_-=W_{\zeta_8^2} \oplus 2W_{\zeta_8^5} \oplus W_{\zeta_8^6}\oplus
2W_{\zeta_8^7}\quad \quad (S^2V_-)^{\tG} \cong W_{\zeta_8^2} \otimes
W_{\zeta_8^6}$.

Here $P(\tC_t,C_t)$ is not isogenous to a Jacobian, since Table 2 of
\cite{fgp} does not contain families of genus 6 curves with an action
of $\Z/8$.  $P(\tC_t,C_t)$ is isogenous to the product of a fixed CM
abelian 4-fold $T'$ with a (Shimura) family of abelian surfaces with
an action of $\Z/4$.  Geometrically set
$D_1 := \tC/\langle g_1^4 \rangle$, $D_2 := \tC/\langle g_2 \rangle$,
$B:= \tC/\langle g_2, g_1^4 \rangle$.  Then
$g(D_2) = 7, g(D_1) = 5, g(B) = 3$,
$P(\tC,C) \sim P(D_2, B) \times P(D_1, B)$, where $T' = P(D_2, B)$,
while $P(D_1, B)$ is a Shimura family of abelian surfaces with an
action of $\Z/4$.

\medskip\noindent \textbf{Example 30.}\\
$\tg = 13$, $g=7$,\\
$\tG = G(20,5)= \Z/2 \times \Z/10 = \langle g_1, g_2 , g_3 \ | \ g_1^2
=1, g_2^2 = 1, g_3^5 =1\rangle$,
$\sigma = g_1g_2$,\\
$ \ttheta(\ga_1) = g_2 , \quad \ttheta(\ga_2)= g_2g_3, \quad \ttheta
(\ga_3) = g _1g_3^2, \quad \ttheta (\ga_4) = g _1g_3^2$.\\
$  \tilde{C}_t:\quad z^{10}=(u^2-1)(u^2-t),\quad \pi:\tilde{C}\rightarrow \PP^1, \quad\pi(u,y)= u$\\
$  g_2(u,z)=(-u,z),\quad  g_1(u,z)= (u,\zeta_{10}^2 z),\quad  g_3(u,z)=(u,\zeta_{10}^5z)\quad \sigma(u,z)= (-u,-z)$.\\
$C_t:\quad y^{10}=x^5(x-1)(x-t), \quad (x,y):=(u^2,u^{-1}z)$.\\
$V_-=W_{\zeta_{10}} \oplus W_{\zeta_{10}^2} \oplus
2W_{\zeta_{10}^4}\oplus W_{\zeta_{10}^7}\oplus W_{\zeta_{10}^8}\quad
\quad (S^2V_-)^{\tG} \cong W_{\zeta_{10}^2} \otimes W_{\zeta_{10}^8}$.

$P(\tC,C)$ is isogenous to $T \times A''$, where $T$ is a fixed CM
abelian surface and $A''$ is a moving abelian 4-fold.  Geometrically,
set $D_1:= \tC/\langle g_1 \rangle$, $D_2:= \tC/\langle g_2 \rangle$,
$F:= \tC/\langle g_1, g_2 \rangle$.  Then $g(D_1) = 6$, $g(D_2) = 4 $,
$ g(F)= 2$ and $T = P(D_2, F)$, $A'' = P(D_1, F)$.  Notice that $A''$
is not isogenous to any Shimura family of Jacobians, since Table 2 of
\cite{fgp} does not contain any family of Jacobians of genus 4 curves
admitting an action of $\Z/10$.

\medskip\noindent \textbf{Example 31.}\\
$\tg = 13$, $g=7$,\\
$\tG = G(24,9)= \Z/2 \times \Z/12= \langle g_1, g_2 , g_3 \ | \ g_1^4
=1, g_2^2 = 1, g_3^3 =1\rangle$, $\sigma = g_1^2g_2$.
\\
$ \ttheta(\ga_1) = g_2 , \quad \ttheta(\ga_2)= g_1, \quad \ttheta
(\ga_3) = g _3g_1^2, \quad \ttheta (\ga_4) = g _1g_2g_3^2$.\\
$  \tilde{C}_t:\quad z^{12}=(u^2-1)^3(u^2-t)^2,\quad \pi:\tilde{C}\rightarrow \PP^1, \quad\pi(u,y)= u$.\\
$  g_2(u,z)= (-u,z),\quad  g_1(u,z)= (u,\zeta_{12}^3 z),\quad  g_3(u,z)= (u,\zeta_{12}^4 z)$, $ \sigma(u,z)= (-u,-z)$.\\
$ C_t:\quad y^{12}=x^6(x-1)^3(x-t)^2, \quad (x,y):=(u^2,u^{-1}z)$.\\
$V_-=W_{\zeta_{12}^2} \oplus W_{\zeta_{12}^3} \oplus
W_{\zeta_{12}^4}\oplus W_{\zeta_{12}^5}\oplus
W_{\zeta_{12}^{10}}\oplus W_{\zeta_{12}^{11}}\quad \quad
(S^2V_-)^{\tG} \cong \zeta_{12}^{2} \otimes\zeta_{12}^{10}$.

Here $P(\tC,C)$ is isogenous to the product a fixed CM abelian 4-fold
$T''$ with the Shimura family (3) of \cite{moonen-special}. Set
$D_1:= \tC/\langle g_1^2 \rangle$, $D_2: = \tC/\langle g_2 \rangle$,
$F_1: = \tC/\langle g_1g_2 \rangle$,
$E_{\rho} = \tC/\langle g_1 \rangle$,
$E_{i} = \tC/\langle g_2, g_3 \rangle$ (these are the two CM elliptic
curves), $F:= \tC/\langle g_1^2, g_2 \rangle$. Then
$g(D_1) = g(D_2) = 4$, $g(F) =1$, $g(F_1) =2$,
$P(\tC, C) \sim P(D_1, F) \times P(D_2, F)$ and
$P(D_1, F) \sim J(F_1) \times E_{\rho}$ and $J(F_1)$ is the family (3)
of \cite{moonen-special}. Moreover, $P(D_2,F) \sim Y \times E_i$,
where $Y$ is a CM abelian surface, so
$T'' = Y \times E_{\rho} \times E_i$.

\medskip\noindent \textbf{Example 34.}\\
$\tg = 15$, $g=8$,\\
$\tG = G(24,9)= \Z/2 \times \Z/12 = \langle g_1, g_2 , g_3 \ | \ g_1^4
=1, g_2^2 = 1, g_3^3 =1\rangle$,
$\sigma = g_1^2g_2$. \\
$ \ttheta(\ga_1) = g_1^2, \quad \ttheta(\ga_2)= g_2g_3, \quad \ttheta
(\ga_3) = g _1g_3, \quad \ttheta (\ga_4) = g _1g_2g_3$.\\
$  \tilde{C}_t:\quad z^{12}=u^8(u^2-1)^6(u^2-t)^7,\quad \pi:\tilde{C}\rightarrow \PP^1, \quad \pi(u,y)= u$.\\
$  g_2(u,z)= (-u,z),\quad  g_1(u,z)= (u,\zeta_{12}^3 z),\quad  g_3(u,z)=(u,\zeta_{12}^4 z)\quad \sigma(u,z)= (-u,-z)$.\\
$C_t :\quad y^{12}=x^{10}(x-1)^6(x-t)^7, \quad (x,y):=(u^2,u^{-1}z)$.\\
$V_-=2W_{\zeta_{12}} \oplus W_{\zeta_{12}^2} \oplus
W_{\zeta_{12}^3}\oplus W_{\zeta_{12}^5}\oplus W_{\zeta_{12}^7}\oplus
W_{\zeta_{12}^8}\quad \quad (S^2V_-)^{\tG} \cong W_{\zeta_{12}^5}
\otimes W_{\zeta_{12}^7}$.

Here $P(\tC,C) \sim T''' \times E_{\rho} \times E_i \times E_{\rho}$,
where $T'''$ is a moving abelian fourfold not isogenous to a Jacobian,
since it carries an action of $\Z/2 \times \Z/12$ and in Table 2 of
\cite{fgp} there does not exist any family of Jacobians of genus 4
curves with an action of $\Z/12$.  More geometrically, set
$E := \tC/\langle g_1, g_2 \rangle$, $D_2:= \tC/\langle g_2 \rangle$,
$F:= \tC/\langle g_1^2, g_2 \rangle$,
$F_1:= \tC/\langle g_1g_2 \rangle$, $F_2:= \tC/\langle g_1 \rangle$,
$E_i \cong \tC/\langle g_2, g_3 \rangle$ (in fact it carries the
action of $\Z/4 \cong \langle g_1 \rangle$).  Then $g(D_1) = 4$,
$g(D_2) = 7$, $g(F ) = g(F_1) = g(F_2) = 2$,
$P(\tC, C) \sim P(F_1, E) \times P(F_2, E) \times P(D_2, F)$ and
$P(F_1, E) \sim P(F_2, E) \sim E_{\rho}$,
$P(D_2, F) \sim E_i \times T'''$.

\bigskip

\subsection{The unramified abelian examples in $\A_8$.}

We describe now the two only examples with $\tG$ abelian, yielding a
Shimura curve generically contained in the Prym locus in $\A_8$.  We
notice that up to now there are no known examples of Shimura varieties
generically contained in the Torelli locus for $g \geq 8$.  On the
other hand, by Remark \ref{*} these families are not families of
Jacobians since Table 2 in \cite{fgp} contains no example at all in
genus 8.

\medskip
\noindent \textbf{Example 35.}\\
$\tg= 17$, $g=9$.\\
$\tG = G(24,9)= \Z/2 \times \Z/12 = \langle g_1, g_2 , g_3 \ | \ g_1^4
=1, g_2^2 = 1, g_3^3 =1\rangle$,
$\sigma = g_1^2g_2$. \\
$ \ttheta(\ga_1) = g_1, \quad \ttheta(\ga_2)= g_1g_2, \quad \ttheta
(\ga_3) = g _1^3g_3, \quad \ttheta (\ga_4) = g _1g_2g_3^2$.\\
$V_-=W_{\zeta_{12}^3} \oplus W_{\zeta_{12}^9} \oplus
W_{\zeta_{12}^4}\oplus 2W_{\zeta_{12}^7}\oplus
2W_{\zeta_{12}^{11}}\oplus W_{\zeta_{12}^2}\quad \quad (S^2V_-)^{\tG}
\cong W_{\zeta_{12}^3} \otimes W_{\zeta_{12}^9}$.

We have $P(\tC,C) \sim P(D,E) \times A$, where $A$ is a fixed CM
abelian 5-fold and $D = \tC/\langle g_2,g_3 \rangle$,
$E = \tC/\langle g_2,g_3, \sigma \rangle$, $g(D) = 3$, $g(E) = 1$ and
$H^{1,0}(P(D,E)) \cong W_{\zeta_{12}^3} \oplus W_{\zeta_{12}^9}$.

\medskip
\noindent \textbf{Example 36.}\\
$\tg= 17$, $g=9$.\\
$\TG = G(32,21) = {\Z}/4 \times {\Z}/4 \times {\Z}/2 \cong \langle g_1
\rangle \times \langle g_2 \rangle \times \langle g_3 \rangle$,
\\
where $o(g_1)=o(g_2) =4$, $o(g_3) = 2$, $\sigma = g_2^2 g_3$.\\
$\ttheta(\gamma_1) = g_2, \ \ttheta(\gamma_2) = g_2g_3, \ \ttheta(\gamma_3) = g_1, \ \ttheta(\gamma_4) = g_1^3 g_2^2 g_3.$\\
$V_-=W_{0,1,0} \oplus W_{1,0,1} \oplus 2W_{1,1,0}\oplus W_{1,2,1}\oplus W_{1,3,0}\oplus W_{2,1,0}\oplus W_{3,1,0}$,\\
$(S^2V_-)^{\tG} \cong W_{1,3,0} \otimes W_{3,1,0}$,\\
where $W_{a_1,a_2,a_3}$ is the irreducible representation of the group
$\TG$ corresponding to the character $\rho_{a_1,a_2,a_3}$ mapping
$g_i$ to $\zeta_{k_i}^{a_i}$, for $i$ from 1 to $3$ ($k_1=k_2 =4$,
$k_3=2$).

Since $\TG$ is abelian both conditions \eqref{condA} and
\eqref{condB1} are satisfied. Theorem \ref{teo1} tells us that this
family of Pryms yields a special subvariety of $\A_{8}$.  Set
$E_1:= \tC/\langle g_1 \rangle$, $E_2:= \tC/\langle g_2 \rangle$,
$E_3:= \tC/\langle g_2g_3 \rangle$,
$E_4:= \tC/\langle g_1 g_2^2 g_3\rangle $. These are all elliptic
curves with a $\Z/4$-action, hence isomorphic to $E_i$.  We have
$H^0(E_1, K_{E_1}) \cong W_{0,1,0},$
$H^0(E_2, K_{E_2}) \cong W_{1,0,1},$
$H^0(E_3, K_{E_3}) \cong W_{1,2,1},$
$ H^0(E_4, K_{E_4}) \cong W_{2,1,0}$.  There is a diagram of coverings
\begin{equation}
  \label{diamma3bis}
  \begin{tikzcd}[row sep=tiny]
    & \tC \arrow {dd} \arrow {ld}
    \arrow{rd}  &    \\
    C_1= \tC/M\arrow{rd} {\pi_1}& & C_2= \tC/N \arrow{ld} {\pi_2}
    \\
    &F= \tC/H&
  \end{tikzcd}
\end{equation}
where $M = \langle g_3, g_1 g_2 \rangle$,
$N = \langle g_3, g_1 g_2^3 \rangle$ and
$H = \langle g_3, g_1 g_2, g_1 g_2^3 \rangle$. We have
$g(C_1) = g(C_2) = 3, \ g(F) = 1$, and
$H^{1,0}(P(C_1, F)) \cong W_{3,1,0} \oplus W_{1,3,0}, \
H^{1,0}(P(C_2,F)) \cong 2 W_{1,1,0}$. Hence
$$P(\tC,C) \sim E_1 \times E_2 \times E_3 \times E_4 \times P(C_1, F) \times P(C_2, F) = 4E_i \times P(C_1, F) \times P(C_2, F)$$
Since $(S^2(V_-))^{\TG} \cong S^2H^{1,0}( P(C_1, F) )$, by Lemma
\ref{move}, $P(\tC,C)$ is isogenous to the product of a fixed CM
abelian variety $A= 4E_i \times P(C_2, F)$ admitting an action of
$\Z/4$, with the Shimura family of abelian surfaces $P(C_1, F)$ having
an action of $\tilde{G}/M \cong \Z/4$ and moving in $\A_2(\Theta)$,
where $\A_2(\Theta)$ is the moduli space of abelian surfaces with a
given type of polarisation $\Theta$.

\subsection {The unramified abelian example in $\A_{10}$.} We now describe the only abelian unramified example in $\A_{10}$. \\

\noindent \textbf{Example 40.}\\
$\tg= 21$, $g=11$.\\
$\TG = G(32,3) = {\Z}/4 \times {\Z}/8 \cong \langle g_2 \rangle \times
\langle g_1 \rangle$,
where $o(g_1)=8$, $o(g_2) =4$,  $\sigma = g_2^2 g_1^4$.\\
$\ttheta(\gamma_1) = g_2, \ \ttheta(\gamma_2) = g_2g_1^4, \ \ttheta(\gamma_3) = g_1, \ \ttheta(\gamma_4) = g_1^3 g_2^2.$\\
$V_-=W_{0,1} \oplus 2W_{2,1} \oplus W_{2,3}\oplus W_{4,1}\oplus W_{5,0}\oplus W_{5,2}\oplus W_{6,1} \oplus W_{7,0}  \oplus W_{7,2}$,\\
$(S^2V_-)^{\tG} \cong W_{2,3} \otimes W_{6,1}$,\\
where $W_{a_1,a_2}$ is the irreducible representation of the group
$\TG$ corresponding to the character $\rho_{a_1,a_2}$ mapping $g_1$ to
$\zeta_{8}^{a_1}$, and $g_2$ to $\zeta_{4}^{a_2}$.

Since $\TG$ is abelian both conditions \eqref{condA} and
\eqref{condB1} are satisfied. Theorem \ref{teo1} tells us that this
family of Pryms yields a special subvariety of $\A_{10}$. Set
$F= \tC/\langle g_1 \rangle$, $ D= \tC/\langle g_2 \rangle$,
$Z= \tC/\langle g_2g_1^4 \rangle$, $X= \tC/\langle g_1^7 g_2 \rangle$,
$E= \tC/\langle g_1g_2, \sigma \rangle$,
$L = \tC/\langle g_1g_2^2 \rangle$. We have $g(F) = g(E) =g(L) = 1 $,
$g(D) = g(Z) = 2$, $g(X) = 3$,
$$P(\tC,C) \sim F \times L \times J(D) \times J(Z) \times P(X, E) \times P(Y, E),$$
where $H^0(F, K_F) = W_{0,1}$, $H^0(L, K_L) = W_{4,1}$,
$H^0(D, K_D) = W_{7,0} \oplus W_{5,0}$,
$H^0(Z, K_Z) = W_{5,2} \oplus W_{7,2}$, $H^{1,0}(P(X, E))= 2W_{2,1}$,
$H^{1,0}(P(Y, E))= W_{2,3} \oplus W_{6,1}$. Since
$(S^2(V_-))^{\TG} \cong S^2H^{1,0}( P(Y, E))$, by Lemma \ref{move},
$P(\tC,C)$ is isogenous to the product of a fixed CM abelian variety
$F \times L \times J(D) \times J(Z) \times P(X, E)$ with the Shimura
family of abelian surfaces $P(Y, E)$.

\subsection{Non abelian examples}

In this section we describe three non-abelian examples satisfying
condition \eqref{condA}, but not \eqref{condB1}.  We prove by ad hoc
arguments that condition \eqref{condB} holds.  Notice that these three
examples are examples of Shimura curves generically contained in the
Prym locus in $\A_g$, with $g = 9$ or $g= 12$.  Moreover by Remark
\ref{*}, Example 39 is not a family of Jacobians.

\medskip
\noindent\textbf{Example 39.}\\
$\tg= 19$, $g = 10$\\
$\TG = G(108,28) = (({\Z}/3 \times {\Z}/3) \rtimes {\Z}/3) \rtimes (
{\Z}/2 \times {\Z}/2) \cong$ \\
$\cong( (\langle g_4 \rangle \times \langle g_5 \rangle )\rtimes
\langle g_3 \rangle ) \rtimes (\langle g_1 \rangle
\times \langle g_2 \rangle)$,\\
where $o(g_4)=o(g_5) =o(g_3) = 3$, $o(g_1)=o(g_2) = 2$,\\
$Z(\TG) = \langle g_5, g_2 \rangle \cong {\Z}/3 \times {\Z}/2$,
\\
$g_3^{-1} g_4 g_3 = g_4 g_5$, $g_1^{-1} g_3 g_1 = g_3^{-1}$,
$g_1^{-1} g_4 g_1 = g_4^{-1}$, $\sigma = g_2$.\\
$\ttheta(\gamma_1) = g_1, \ \ttheta(\gamma_2) = g_1g_4, \
\ttheta(\gamma_3) = g_1g_2g_3, \ \ttheta(\gamma_4) = g_1 g_2
g_3g_4^2$.\\
Using \verb|MAGMA| we obtain the following decomposition in irreducible representations
$$V_-= V_{15} \oplus V_{16} \oplus V_{20} $$
(the notation is the one used by \verb|MAGMA|), 
$\dim(V_{15}) = \dim(V_{16}) = \dim(V_{20}) =3$. \\

The character table of $\tilde{G}$ and the formula
$$
\dim(V_i \otimes V_j)^{\tilde{G}} = \frac{1}{|\tilde{G}|}\sum_{h \in \tilde{G}} \chi_i(h) \chi_j(h),
$$ 
allow to check that 
$\dim (S^2(V_-))^{\TG} = \dim (V_{15} \otimes V_{20})^{\TG}=1$, hence
condition \eqref{condA} is satisfied.

We have to verify that also condition $(B)$ is satisfied. This is equivalent to showing that the family of Pryms moves, i.e. it is not isotrivial. 
This is implied by the following \\

{\bf Claim}. 
The  Prym variety $P(\tilde{C},C)$ is isogenous to a product of an abelian variety and the Jacobian $J(D)$  of a moving genus 2 curve $D$. 
\\

To prove the claim we check that $D= \tC/\langle g_1g_2, g_3 \rangle$
is a genus 2 curve such that
$H^0(D, K_D) \subset V_{15} \oplus V_{20} \subset V_-$. Finally, to
show that $J(D)$ moves we show that the curve $D$ moves as $\tC$
moves.

 Let $K := \langle g_1g_2, g_3 \rangle \cong S_3$. By Riemann-Hurwitz $\tilde{C}/K =:D$ has genus
2. 
The trace of $g_1$ on $V_{15}$ is $-1$. Since $g_1$ has order 2, we
have a decomposition 
$V_{15} = X_{15} \oplus W_{15}$, where $\dim(X_{15})=1$,
$\dim(W_{15})=2$ and ${g_1}_{|X_{15}} = Id_{X_{15}}$,
${g_1}_{|W_{15}} =- Id_{W_{15}}$. The same happens for
$V_{20} = X_{20} \oplus W_{20}$, where $\dim(X_{20})=1$,
$\dim(W_{20})=2$ and ${g_1}_{|X_{20}} = Id_{X_{20}}$,
${g_1}_{|W_{20}} =- Id_{W_{20}}$. The trace of $g_1$ on $V_{16}$ is 1,
so $V_{16} = X_{16} \oplus W_{16}$, where $\dim(X_{16})=2$,
$\dim(W_{16})=1$ and ${g_1}_{|X_{16}} = Id_{X_{16}}$,
${g_1}_{|W_{16}} =- Id_{W_{16}}$. Since $g_2$ acts as $-Id$ on
$V_- = V_{15} \oplus V_{16} \oplus V_{20}$ we have
${g_1g_2}_{|X_{j}} = -Id_{X_{j}}$, ${g_1g_2}_{|W_{j}} = Id_{W_{j}}$,
for $j=15,16,20$.

The group $S_3$ has three irreducible representations, $Y_1$, $Y_2$,
$Y_3$, where $\dim (Y_i)=1$, $i=1,2$, $\dim(Y_3) =2$, $Y_1$ is the
trivial one, $Y_2$ is the one given by the sign. 
If we look at the action of the subgroup
$K \cong S_3$ on $V_j$, $j = 15,16,20$, we see that we have
$V_{15} \cong Y_1 \oplus Y_3$ and the same for $V_{20}$, while
$V_{16} \cong Y_2 \oplus Y_3$. Hence the fixed point locus of the
action of $K$ on $V$, which we know to be two dimensional, since it is isomorphic to $H^0(D, K_D)$,  is given by
two copies of $Y_1$, one contained in $V_{15}$ and the other contained
in $V_{20}$. Therefore
$H^0(D, K_D) \subset V_{15} \oplus V_{20} \subset V_-$.  Hence
$P(\tC,C) \sim J(D) \times T$ for some 8-dimensional abelian
variety $T$.   To prove condition (B)
we will show that $J(D)$ moves.

Consider the action on $\tC$ of the subgroup
$L:= \langle g_1, g_2, g_3 \rangle \cong K \times \Z/2$.  By
Riemann-Hurwitz $\tC/L\cong \proj ^1$ and we have a factorisation
\begin{equation}
  \label{diamma4bis}
  \begin{tikzcd}
    \tC \arrow {rd} \arrow{r}{\phi} & D= \tC/K \arrow{d}
    [swap]{2:1}{p_D}
    \\
    & \proj^1= \tC/L= D/\langle g_2 \rangle.
  \end{tikzcd}
\end{equation}
If we prove that the 6 critical values of the hyperelliptic covering
$p_D$ move, we are done.  Denote by $\psi: \tC \ra \proj^1 = \tC/\TG$
the original covering and consider the factorisation
\begin{equation}
  \label{diamma5}
  \begin{tikzcd}
    \tC \arrow {d}{\psi}
    \arrow{r}{\phi}  & D= \tC/K \arrow{ld} {\pi} \\
    \proj^1= \tC/\TG. &
  \end{tikzcd}
\end{equation}
The $18:1$ covering $\pi$ factors as follows
\begin{equation}
  \label{diamma6}
  \begin{tikzcd}[row sep=tiny]
    D \arrow {dd}{\pi}
    \arrow{rd}{p_D}     & \\
    & D/\langle g_2 \rangle \cong \proj^1 \arrow{ld} {\pi'}
    \\
    \proj^1.&
  \end{tikzcd}
\end{equation}
Denote by $\{P_1,
P_2, P_3, P_4 \}$ the critical values of $\psi$ and by $\{y_1, y_2,
y_3, z_1, z_2, z_3\}$ the critical values of
$p_D$.
Looking at the above diagrams, one easily checks that the critical
values of $p_D$
all lie in $\pi'^{-1}(P_1) \cup \pi'^{-1}(P_2)$. More precisely:

$\pi'^{-1}(P_1)$
consists of 3 critical values $\{y_1,y_2,
y_3\}$ of $p_D$ which are regular for
$\pi'$
and of three critical points of order 2 for $\pi'$
which are regular values for $p_D$.

$\pi'^{-1}(P_2)$ consists of 3 critical values $\{z_1,z_2, z_3\}$ of
$p_D$ which are regular for $\pi'$ and of three critical points of
order 2 for $\pi'$ which are regular values for $p_D$.

$\pi'^{-1}(P_3)$ consists of three regular points and three critical
points of order 2 of $\pi'$ (all regular values for $p_D$).

$\pi'^{-1}(P_4)$ consists of two critical points of $\pi'$, one of
order 3 and one of order 6 (both regular values for $p_D$).

To understand better the $9:1$ map $\pi'$ let us consider this last
factorisation
\begin{equation}
  \label{diamma7}
  \begin{tikzcd}[row sep=tiny]
    \proj^1 = D/\langle g_2 \rangle \arrow {dd}{\pi'}
    \arrow{rd}{p_5}     & \\
    & \proj^1 = D/ \langle g_2, g_5 \rangle \arrow{ld} {\bar{\pi}}
    \\
    \proj^1= D/\langle g_2,g_4,g_5 \rangle.&
  \end{tikzcd}
\end{equation}
We have the following: $\bar{\pi}^*(P_i) = w_i + 2q_i$, for all
$i=1,2,3,4$, and $p_5^{-1}(w_1) = \{y_1,y_2,y_3\}$,
$p_5^{-1}(w_2) = \{z_1,z_2,z_3\}$.  The critical values of the Galois
$3:1$ covering $p_5$ are $w_4$ and $q_4$.

Consider the $3:1$ covering $\bar{\pi}: \proj^1 \ra \proj^1$.
Composing with automorphisms of $\proj^1$ in the source and in the
target, we can assume that $P_4 = \infty$, $P_3 = 0$, $P_2= 1$.  We
denote $P_1$ by the parameter $\lambda$, $w_4 =0$, $q_4 = \infty$,
$w_3 = 1$, and set $q_3=a$ for simplicity. Hence
$\bar{\pi}(z) = b \frac{(z-1)(z-a)^2}{z}$, where $b$ is nonzero.

Computing the derivative of $\bar{\pi}$ we see that the other two
critical points $q_1,q_2$ are $\frac{1\pm\sqrt{1+8a}}{4}$. Imposing
that $1$ and $\lambda$ are the corresponding critical values, we see
that $a, w_1,w_2$ are all non constant functions on $\lambda$.  We can
assume that $p_5(z) = z^3$, hence
$\{y_1, y_2, y_3 \} = p_5^{-1}(w_1) = \{ z \in \ \proj^1 \ | \ z^3 =
w_1\}$
and
$\{z_1, z_2, z_3 \} = p_5^{-1}(w_2) = \{ z \in \ \proj^1 \ | \ z^3 =
w_2\}$,
and since $w_1$ and $w_2$ are non-constant functions of $\lambda$, the
same holds for $y_i, z_i$, $ i=1,2,3$. This proves that as $\lambda$ varies, the hyperelliptic covering
$p_D:D \ra \proj^1$ varies, and hence the genus 2 curve $D$ varies, so
$J(D)$ varies. This proves the Claim.

A more detailed analysis shows that $P(\tC, C) \sim 3E \times 3 J(D)$, where $E= \tilde{C}/H$, where $H := \langle g_1, g_3 \rangle \cong S_3$. By Riemann Hurwitz one proves that $E$ has genus 1.

Moreover, looking at the action
 $H \cong S_3$ on $V_{j}$, $j =15,16,20$, one sees that  $H^0(E, K_E) \subset V_{16}$ and, since 
$(S^2(V_-))^{\TG} = (V_{15} \otimes V_{20})^{\TG}$ the elliptic
curve $E$ does not move by Lemma \ref{move}. 

\medskip
\noindent \textbf{Example 42.}\\
$\tg= 25$, $g = 13$.\\
$\TG = G(48,32) = {\Z}/2 \times SL(2, {\mathbb F}_3)\cong \langle g_1
\rangle \times SL(2, {\mathbb F}_3)$,
where\\
$SL(2, {\mathbb F}_3)= \bigg\langle g_2 =
\begin{pmatrix}
  2 & 1 \\ 2 & 0
\end{pmatrix}
\ g_3 =
\begin{pmatrix}
  0&2 \\ 1& 0
\end{pmatrix}
\ g_4 =
\begin{pmatrix}
  1& 2 \\ 2& 2
\end{pmatrix}
\ g_5 =
\begin{pmatrix}
  2 & 0 \\ 0 & 2
\end{pmatrix} \ | \ g_3^2 = g_4^2 = g_5,$ \\
\phantom{adadasdasdasdasdad}
$ \ g_5^2 = 1, \ g_2^3 =1, \ g_2^{-1} g_3 g_2 = g_4, \ g_2^{-1} g_4
g_2 = g_3 g_4, g_3^{-1} g_4 g_3
= g_4 g_5 \bigg\rangle$.\\
$\sigma = g_5,$
$Z(\tG) = \langle g_1, g_5 \rangle \cong \Z/2 \times \Z/2.$
\\
$\ttheta(\gamma_1) = g_1, \ \ttheta(\gamma_2) = g_1g_2g_5, \
\ttheta(\gamma_3) = g_1g_2 g_4 g_5, \ \ttheta(\gamma_4) = g_1 g_2
g_3g_4 g_5$.\\
$V_-= V_7 \oplus 2V_8 \oplus 2V_{11} \oplus V_{12}$.\\
Here $V_i$ are the irreducible representations as enumerated by
\verb|MAGMA|. Note that  $\dim(V_7) = \dim(V_8) = \dim(V_{11}) = \dim(V_{12})= 2$.\\
$(S^2(V_-))^{\TG} = (V_{8} \otimes V_{8})^{\tG} =
(\Lambda^2V_8)^{\tG}$
is one dimensional, hence condition \eqref{condA} is
satisfied. \\
We have to check condition \eqref{condB}.

Consider the commutative diagram:
\begin{equation}
  \label{diamma8}
  \begin{tikzcd} 
    & \tC \arrow {d}\arrow {ld}
    \arrow{rd}  &    \\
    C''= \tC/\langle g_1g_5 \rangle \arrow{dr} {p} & C'= \tC/\langle
    g_1 \rangle \arrow{d} {q} &
    C= \tC/\langle g_5 \rangle \arrow{dl}     \\
    &E= \tC/\langle g_1, g_5 \rangle.&
  \end{tikzcd}
\end{equation}
The curves $C'$ and $C''$ have genus 7, while $E$ has genus 1.  One
can check that $P(\tC, C) \sim P(C',E) \times P(C'', E)$, since
$H^{1,0}(P(C',E)) \cong V_7 + 2V_{11}$ and
$H^{1,0}(P(C'',E)) \cong 2V_8 + V_{12}$.  Since
$(S^2(V_-))^{\TG} = (V_{8} \otimes V_{8})^{\tG}$, the abelian variety
$P(C', E)$ does not move by Lemma \ref{move}.  To prove condition
\eqref{condB} we need to show that $P(C'', E)$ moves.

We have
$(S^2(H^0(C'', K_{C''})))^{\TG} \cong (S^2(2V_{8} + V_{12}))^{\tG} +
(S^2V_{3})^{\tG} = ( \Lambda^2V_8)^{\tG} + (S^2V_{3})^{\tG} = (
\Lambda^2V_8)^{\tG}$,
as one can check. Therefore $(S^2(H^0(C'', K_{C''})))^{\TG}$ has
dimension 1. So the family $C'' \ra C''/H = \tC/\tG$, where
$H = \tG/\langle g_1g_5 \rangle \cong SL(2, {\mathbb F}_3)$, satisfies
condition $(*)$ of \cite{fgp}, i.e. the codifferential of the Torelli
map, i.e.  the multiplication map
$(S^2(H^0(C'', K_{C''})))^{H} \ra H^0(C'', 2K_{C''})^{H}$, is an
isomorphism. So this is the Shimura family (40) of \cite{fgp}. Since
$J(C'') \sim P(C'', E) \times E$ and $J(C'')$ moves, while $E$ is
fixed, $P(C'', E)$ necessarily moves. Therefore $P(\tC,C)$ moves as
well and condition $(B)$ is satisfied.  Notice that on $E$ there is an
action of $\langle g_2 \rangle \cong \Z/3$, hence $E= E_{\rho}$.

\medskip\noindent \textbf{Example 43}\\
$\tg= 25$, $g =13$.
$\TG = G(48,30) = A_4 \rtimes \Z/4 = A_4 \rtimes \langle g_1 \rangle$,\\
where
$A_4 = \langle g_3 = (123), g_4 = (12)(34), g_5 = (13)(24)$,\\
$ g_3^3 =1, \ g_4^2 = 1, \ g_5^2 = 1, \ g_3^{-1} g_4 g_3 = g_5, \
g_3^{-1} g_5
g_3 = g_4 g_5 , \ g_4 g_5 = g_5 g_4$,\\
$g_1^{-1} g_3 g_1 = g_3^2, g_1^{-1} g_4 g_1 = g_5, g_1^{-1} g_5 g_1 =
g_4 \rangle $.\\
$\sigma = g_2 = g_1^2, $ $Z(\tG) = \langle g_2\rangle \cong \Z/2$.\\
$\ttheta(\gamma_1) = g_1g_5, \ \ttheta(\gamma_2) = g_1g_4, \
\ttheta(\gamma_3) = g_1g_3 g_4, \ \ttheta(\gamma_4) = g_1g_3g_4 g_5$.\\
$V_-= 2V_3 \oplus 2V_5 \oplus 2V_{10}$,
where $\dim(V_3) = 1$, $\dim(V_5) = 2,$ $\dim(V_{10})=3$.\\
(Notation of \verb|MAGMA| as above.)\\
$(S^2(V_-))^{\TG} = (V_{5} \otimes V_{5})^{\tG} = (\Lambda^2V_5)^{\tG}
= \Lambda^2V_5$.\\
So $(S^2(V_-))^{\TG} = (V_{5} \otimes V_{5})^{\tG} = \Lambda^2V_5$ is
1-dimensional, hence condition \eqref{condA} is satisfied.  We check
now condition \eqref{condB}.

Consider the normal subgroup
$H := \langle g_4, g_5 \rangle \cong \Z/2 \times \Z/2 \lhd \tG$.  Set
$C' = \tC/H$. One sees that $\tC \ra C'= \tC/H$ is a $4:1$ \'etale
covering and $C'$ has genus 7.  Moreover
$H^0(C', K_{C'}) = 2V_3 + 2V_5 + V_2$.

Set $H': = \langle \sigma \rangle \times A_4$. The quotient
$E:= \tC/H'$ is a genus one curve and $H^0(E, K_E) = V_2$.  So we have
the following commutative diagram:
\begin{equation*}
  \label{diamma9}
  \begin{tikzcd}[row sep=tiny]
    & \tC \arrow {ld}
    \arrow{rd}  &   & \\
    C'= \tC/H \arrow{dr} {p} & &
    C= \tC/\langle \sigma \rangle \arrow{dl}{q} &    \\
    &E= \tC/H'.& &
  \end{tikzcd}
\end{equation*}
Hence $P(\tC, C) \sim P(C', E) \times A$, where $A$ is a fixed abelian
6-fold.  Consider the groups $L = H'/H < K = \tG/H$,
$L\cong \langle \sigma \rangle \times (A_4/H) \cong \Z/6$. We have
\begin{equation}
  \label{diamma10}
  \begin{tikzcd}[row sep=tiny]
    & C' \arrow {ld} {\phi}\arrow{dd}
    \arrow{rd}  &   &  \\
    D= C'/\langle g_3 \rangle \arrow{dr} {\pi}& &
    E'= C'/\langle g_2 \rangle \arrow{dl}{\bar{q}} &    \\
    &E.& &
  \end{tikzcd}
\end{equation}
Notice that $D$ has genus 3 and $\phi$ is a $3:1$ \'etale
covering. Moreover,
$H^0(C', K_{C'})^{\langle g_3 \rangle} \cong H^0(K_D) \cong V_2 \oplus
2V_3$,
hence $H^{1,0}(P(C', D)) \cong 2V_5$ and
$P(\tC,C) \sim P(C',D) \times A'$, where $A' $ is a fixed abelian
variety of dimension 8.

We want to show that $P(C',D)$ moves and hence yields a Shimura curve.

Since the map $\phi$ is a 3:1 \'etale covering, it corresponds to a
3-torsion line bundle $\eta$ on $D$ and the pairs $[C',D]$ vary in a
curve ${\mathcal B}$ in the moduli space ${\mathcal R'}_{3,3}$
parametrising pairs $[C',D]$ where $C'\rightarrow D$ is a $3:1$
\'etale Galois covering of a genus three curve $D$. Denote by
${\mathcal P}: {\mathcal R'}_{3,3} \rightarrow \A_4(\Theta)$ the
corresponding Prym map. To conclude we need to show that the
differential of the restriction of ${\mathcal P}$ to ${\mathcal B}$ is
injective. Notice that the image of
$d{\mathcal P}_{[C',D]}: T_{[C',D]}{\mathcal R'}_{3,3} \rightarrow
T_{P(C',D)} \A_4(\Theta)$
is contained in the $\Z/3$ invariant part of
$T_{P(C',D)} \A_4(\Theta)$. Therefore
\begin{equation*}
  d{\mathcal P}^{\vee}_{[D, \eta]}: (T^*_{P(C',D)}  \A_4(\Theta))^{\Z/3} \cong S^2H^{1,0}(P(C',D))^{\Z/3} \rightarrow  T^*_{[D, \eta]}{\mathcal R'}_{3,3} \cong H^0(2K_D).
\end{equation*}
Observe that
$H^{1,0}(P(C',D)) \cong H^0(K_D(\eta)) \oplus H^0(K_D(\eta^2)) $,
hence
\begin{equation*}
  S^2H^{1,0}(P(C',D))^{\Z/3} \cong H^0(K_D(\eta)) \otimes
  H^0(K_D(\eta^2))
\end{equation*}
and the codifferential is identified with the multiplication map
$$m: H^0(K_D(\eta)) \otimes H^0(K_D(\eta^2))  \rightarrow H^0(2K_D).$$

First of all we prove that $m$ is injective. Observe that injectivity
follows from the base point free pencil trick if we show that
$|K_D(\eta^2)|$ is base point free. In fact in this case the kernel of
$m$ would be $H^0(\eta) = 0$.

Let us now prove that $|K_D(\eta^2)|$ is base point free.

So assume that $|K_D(\eta^2)|$ has a base point $p \in D$. Then
$h^0(K_D(\eta^2)(-p)) = h^1(\eta(p)) = 2$, hence $h^0(\eta(p)) = 1$,
therefore there exists a point $q \in D$ such that
$\eta = \OO_D(q-p)$. By the commutativity of diagram \eqref{diamma10},
we know that $\eta = \pi^*(\eta_E)$, where $\eta_E$ is the 3-torsion
line bundle on $E$ corresponding to the $3:1$ \'etale covering
$\bar{q}$. In particular $\eta$ is invariant by the covering
involution $\iota$ of $\pi$.  Hence we have
$p-q \equiv \iota(p)- \iota(q)$, equivalently
$p + \iota(q) \equiv \iota(p) +q$, which is impossible since $D$ is
not hyperelliptic. In fact the family $D \rightarrow {\mathbb P}^1$ is
the family $(4)$ of \cite{moonen-special}, which is not hyperelliptic.

Now denote by $\alpha$ the line bundle on $E$ yielding the $2:1$
covering $\pi$. We have $K_D= \pi^*(\alpha)$. Via the projection
formula, the map $m$ can be identified with the multiplication map
$$m_E: H^0(\alpha \otimes \eta_E) \otimes H^0(\alpha \otimes \eta_E^2) \rightarrow H^0(\alpha^2) \subset H^0(\alpha^2) \oplus H^0(\alpha) \cong H^0(2K_D).$$
Notice that $H^0(\alpha^2)$ can be identified with the cotangent space
to the bielliptic locus at the point $D$ and the cotangent space
$T^*_{[C',D]}{\mathcal B}$ is identified to a 1 dimensional subspace
of it via the forgetful map
${\mathcal R'}_{3,3} \rightarrow {\mathcal M}_3$. Since
$\dim( H^0(\alpha^2) ) = 4$ and $m$ is injective, $m_E$ is an
isomorphism, hence the differential of the restriction of the Prym map
to ${\mathcal B}$ at the point $[C',D]$ is injective.  Therefore the
family $P(\tC,C)$ moves.

\section{Examples in the ramified Prym locus}

In this section we briefly describe the examples of families of
  ramified Pryms satisfying conditions \eqref{condA} and
  \eqref{condB}, hence yielding Shimura curves  contained
  in the ramified Prym locus.

\medskip\noindent \textbf{Example 1.}\\
$ \tg = 4, g=2, \quad \tG = \Z/6=\Z/2 \times \Z/3 = \langle g_1, g_2 \
| g_1^2=1, \ g_2^3 = 1, \ g_1g_2 =g_2 g_1\rangle . $\\
$ \ttheta(\ga_1) = g_2 , \quad \ttheta(\ga_2)= g_2^2, \quad \ttheta
(\ga_3) = g _1 g_2 , \quad \ttheta (\ga_4) = g _1 g_2^2$.\\
$\tilde{C}_t:\quad y^3=u(u^2-1)^2(u^2-t),\quad
\pi:\tilde{C}_t\rightarrow
\PP^1, \quad\pi(u,y)= u$.\\
$\sigma=g_1:(u,y)\to (-u,-y),g_2:(u,y)\to (u,\zeta_3 y)$.\\
$C_t:\quad z^3=x^2(x-1)^2(x-t)\quad (x,z)=(u^2,yu)$.\\
Let $\zeta_3^i$ denote the character of $\sx g_2\xs$ mapping $g_2$ to
$\zeta_3^i$.  Let $W_{\zeta_3^i}$ be the irreducible representation of
$\sx g_2\xs $ corresponding to the character $\zeta_3^i$.\\
As a representation of $\sx g_2\xs$ we have:
$ V_-= W_{\zeta_3} \oplus W_{\zeta_3^2} , \quad (S^2V_-)^{\tG} \cong
W_{\zeta_3} \otimes W_{\zeta_3^2}$.\\
In the notation of Magma
$ V_4 = W_ {\zeta_3} , \quad V_6 = W_{\zeta_3^2} $.  The orbit of
$W_{\zeta_3}$ under the action of $Gal(\Q(\zeta_3), \Q)$ is clearly
$\{W_{\zeta_3}, W_{\zeta_3^2}\}$.  The Pryms $P(\tC,C)$ form a
1-dimensional family of abelian surfaces with a $\Z /3 $-action.  This
yields a Shimura curve, hence it is family (3) of
\cite{moonen-special}.

\medskip\noindent \textbf{Example 2.}\\
$     \tg = 4, \qquad g=2$, \\
$ \tG = D_6=G(12,4)= \langle g_1, g_2, g_3 \ | \ g_1^2=g_2^2 =
g_3^3=1, \ g_1^{-1} g_3 g_1 = g_3^{-1},
\ g_1g_2 =g_2 g_1, \ g_2 g_3 = g_3 g_2 \rangle $,\\
$ \sigma= g_2$.\\
$ \ttheta(\ga_1) = g_1 , \quad \ttheta(\ga_2)= g_1g_2, \quad \ttheta
(\ga_3) = g _3 , \quad \ttheta (\ga_4) = g _2 g_3^2$.\\
We observe that this is the same family as in Example 1, since the
family of the curves $C$ is family (3) of \cite{moonen-special}. In
fact family (3) is equal to family (28) of \cite{fgp}.

\medskip
\noindent
In the following two examples we have\\
$     \tg = 8, \qquad g=4$, \\
$ \tG = \Z/2 \times \Z/5 = \langle g_1, g_2 \ | g_1^2=1, \ g_2^5 = 1,
\ g_1g_2 =g_2 g_1\rangle $,
$\sigma = g_1$.  \\
In both cases the family of Pryms is a 1-dimensional family of abelian
4-folds with an action of $\Z /5 $, that yields a Shimura curve .

\medskip
\noindent\textbf{Example 3.}\\
$ \ttheta(\ga_1) = g_2 , \quad \ttheta(\ga_2)= g_2^2, \quad \ttheta
(\ga_3) = g _1 g_2 , \quad \ttheta (\ga_4) = g _1 g_2$.\\
$\tilde{C}_t:\quad y^{5}=u^2(u^2-1)^2(u^2-t),\quad \pi:\tilde{C}_t\rightarrow \PP^1, \quad\pi(u,y)= u$.\\
$g_1(u,y)= (-u,y),\quad g_2(u,y)= (u,\zeta_5 y)$.\\
$C_t:\quad y^5=x(x-1)^2(x-t)\quad (x,y)=(u^2,y)$.
\\
$  V_-= W_{\zeta_5} \oplus 2W_{\zeta_5^3} \oplus W_{\zeta_5^4} $.\\
$  (S^2V_-)^{\tG} \cong W_{\zeta_5} \otimes W_{\zeta_5^4}$.\\

\medskip
\noindent\textbf{Example 4.}\\
$ \ttheta(\ga_1) = g_2 , \quad \ttheta(\ga_2)= g_2^2, \quad \ttheta
(\ga_3) = g _1 g_2^3 , \quad \ttheta (\ga_4) = g _1 g_2^4$.\\
$\tilde{C}_t:\quad y^{5}=u(u^2-1)^2(u^2-t),\quad \pi:\tilde{C}\rightarrow \PP^1, \quad\pi(u,y)= u$.\\
$g_1:(u,y)\to (-u,-y),\quad g_2:(u,y)\to (u,\zeta_5 y)$.\\
$C_t:=\quad z^5=x^3(x-1)^2(x-t)\quad (x,z)=(u^2,yu)$.
\\
$V_-=2W_{\zeta_5} \oplus W_{\zeta_5^2} \oplus W_{\zeta_5^3}$.\\
$(S^2V_-)^{\tG} \cong W_{\zeta_5^2} \otimes W_{\zeta_5^3}$.

In the next example the group $\tG$ is not abelian and condition
\eqref{condB1} is not satisfied.  We show with a geometrical argument
that that condition \eqref{condB} holds and therefore we get a Shimura
curve in $\A_4$.

\medskip\noindent \textbf{Examples 5}
\\
$  \tg = 8,  g=4$, \\
$\tG = G(24,10) \cong \Z/3 \times D_4 = $\\
$= \langle g_1, g_2, g_3 \ | \ g_1^2=g_2^2 = g_3^3 =1,
\  (g_2g_1)^4 =1, \ g_3 g_i = g_i g_3 \ i=1,2, \rangle \cong $\\
$\cong \langle g_3 \rangle \times \langle  x=g_2g_1, y=g_1 \ | \ x^4 = y^2 =1, \  yx = x^{-1}y \rangle$\\
$   \sigma = (g_2g_1)^2$\\
$ \ttheta(\ga_1) = g_2 , \quad \ttheta(\ga_2)= g_1, \quad \ttheta
(\ga_3) = g _3, \quad \ttheta (\ga_4) = g _1 g_2 g_3^2$.\\
$V_-= V_{14} \oplus V_{15}$,
where $\dim(V_{14}) = \dim(V_{15}) = 2$ (notation of \verb|MAGMA|),\\
$(S^2(V_-))^{\TG} = (V_{14} \otimes V_{15})^{\tG} $, it is one
dimensional, hence condition \eqref{condA} is satisfied.  We need to
check condition \eqref{condB}. Consider
$\langle g_1\rangle \cong \Z/2$ and set $D = \tC/\langle g_1\rangle$.
The quotient $\tC \rightarrow D$ is a double cover ramified in 6
points, hence $g(D) = 3$.  We have the following commutative diagram:
\begin{equation}
  \label{diammaeven}
  \begin{tikzcd}[row sep=tiny]
    & \tC \arrow {ld}
    \arrow{rd}  &    \\
    D= \tC/\langle g_1 \rangle \arrow{dr} {p} & &
    C= \tC/\langle \sigma \rangle \arrow{dl}{q}     \\
    &E= \tC/\langle \sigma, g_1 \rangle .&
  \end{tikzcd}
\end{equation}
Here $q$ is a double cover ramified in 6 points and $E$ is an elliptic
curve with an action of $\langle g_3 \rangle \cong \Z/3$, hence it is
constant.  From the above diagram one sees that
$P(\tC, C) \sim P(D,E) \times A$, where $A$ is an abelian surface.  To
prove that $P(\tC, C)$ moves, we will show that $P(D,E)$ moves. Since
$E$ is fixed, it is equivalent to show that $J(D)$ moves in a one
dimensional family.  Denote by $\psi: \tC \to {\proj}^1 = \tC/\tG$ our
original covering, by $P_1,P_2, P_3, P_4$ the branch points of $\psi$
and by $\pi: E \to E/\langle g_2, g_3 \rangle \cong \tC/\tG$. The
branch points of the map $\pi$ (given by the $\Z/6$-action on $E$) are
$P_1, P_3, P_4$, hence, since $E$ does not move, the three branch
points of the original map $\psi$, $P_1,P_3,P_4$ do not move,
therefore $P_2$ must move.  The map $p$ has 4 branch points
$\{e_1, e_2, e_3, e_4\} \subset E$, where $\pi(e_i) = P_2$ for
$i =1,2,3$, while $\pi(e_4) = P_4$.  Since $P_2$ moves, the three
branch points $\{e_1, e_2, e_3\}$ move, hence the covering $p:D \to E$
moves and so do $D$ and $J(D)$. This concludes the argument.\\

\medskip \noindent
The following two examples both have\\
$  \tg = 12, \qquad g=6$, \\
$ \tG = \Z/2 \times \Z/7 = \langle g_1, g_2 \ | g_1^2=1, \ g_2^7 = 1,
\ g_1g_2 =g_2 g_1\rangle $,
$\sigma = g_1$.\\
In both cases the family $P(\tC,C)$ is a 1-dimensional family of
abelian 6-folds with an action of $\Z /7 $, that yields a Shimura
curve.

\medskip\noindent\textbf{Example 6.}
$ \ttheta(\ga_1) = g_2 , \quad \ttheta(\ga_2)= g_2^3, \quad \ttheta
(\ga_3) = g _1 g_2^4 , \quad \ttheta (\ga_4) = g _1 g_2^6$.\\
$\tilde{C}_t:\quad y^{7}=u(u^2-1)^3(u^2-t),\quad \pi:\tilde{C}\rightarrow \PP^1, \quad\pi(u,y)= u$,\\
$g_1:(u,y)\to (-u,-y), g_2:(u,y)\to (u,\zeta_{7} y)$,\\
$C_t: \quad z^7=x^4(x-1)^3(x-t)\quad (x,z)=(u^2,yu)$.\\
$V_-=2W_{\zeta_{7}} \oplus W_{\zeta_{7}^2} \oplus 2W_{\zeta_{7}^3}\oplus W_{\zeta_{7}^5}$.\\
$(S^2V_-)^{\tG} \cong W_{\zeta_{7}^2} \otimes W_{\zeta_{7}^5}$.

\medskip\noindent\textbf{Example 7.}
$ \ttheta(\ga_1) = g_2 , \quad \ttheta(\ga_2)= g_2^3, \quad \ttheta
(\ga_3) = g _1 g_2^5 , \quad \ttheta (\ga_4) = g _1 g_2^5$.\\
$\tilde{C}_t:\quad y^{7}=u^3(u^2-1)^3(u^2-t),\quad \pi:\tilde{C}\rightarrow \PP^1, \quad\pi(u,y)= u$.\\
$g_1:(u,y)\to (-u,-y), g_2:(u,y)\to (u,\zeta_7 y)$.\\
$C_t:\quad z^7=x^5(x-1)^3(x-t)\quad (x,z)=(u^2,yu)$.\\
$V_-=2W_{\zeta_{7}} \oplus W_{\zeta_{7}^3} \oplus W_{\zeta_{7}^4}\oplus 2W_{\zeta_{7}^5}$.\\
$(S^2V_-)^{\tG} \cong W_{\zeta_{7}^3} \otimes W_{\zeta_{7}^4}$.

\medskip\noindent\textbf{Example 8.}
$  \tg = 14, \qquad g=7$, \\
$ \tG = G(18,2) = \Z/2 \times \Z/9 = \langle g_1, g_2 \ | g_1^2=1, \ g_2^9 = 1,
\ g_1g_2 =g_2 g_1\rangle $,
$\sigma = g_1$.\\
$ \ttheta(\ga_1) = g_2^3 , \quad \ttheta(\ga_2)= g_2, \quad \ttheta
(\ga_3) = g _1 g_2^7 , \quad \ttheta (\ga_4) = g _1 g_2^7$.\\
$\tilde{C}_t:\quad y^9=u^7(u^2-1)^6(u^2-t)^5,\quad \pi:\tilde{C}\rightarrow \PP^1, \quad(u,y)\to u.$\\
$g_1:(u,y)\to (-u,-y), g_2:(u,y)\to (u,\zeta_9 y)$.\\
$C_t:\quad z^9=x^8(x-1)^6(x-t)^5\quad (x,z)=(u^2,yu).$\\
$V_-=W_{\zeta_{9}} \oplus 2W_{\zeta_{9}^2} \oplus 2W_{\zeta_{9}^4}\oplus W_{\zeta_{9}^6} \oplus W_{\zeta_{9}^8} $  .\\
$(S^2V_-)^{\tG} \cong W_{\zeta_{9}} \otimes W_{\zeta_{9}^8}$.

In the next example that satisfies condition \eqref{condA}, the group
$\tG$ is not abelian and condition \eqref{condB1} does not hold.
Hence we show again with a geometrical argument that also condition
\eqref{condB} holds and therefore it gives a Shimura curve 
contained in the ramified Prym locus in $\A_8$. Notice that by Remark
\ref{*} it is not contained in the Torelli locus.

\medskip\noindent\textbf{Example 9.}
$  \tg = 16, \qquad g=8$, \\
$\tG = G(40,10) \cong \Z/5 \times D_4 =$\\
$= \langle g_1, g_2, g_3 \ | \ g_1^2=g_2^2 = g_3^5=1
(g_2g_1)^4 = 1,  \  g_3 g_i = g_i g_3 \ i=1,2, \rangle \cong$\\
$ \cong \langle g_3 \rangle \times \langle  x=g_2g_1, y=g_1 \ | \ x^4 = y^2 =1, \  yx = x^{-1}y \rangle\\
\sigma = (g_2g_1)^2$.\\
$ \ttheta(\ga_1) = g_2 , \quad \ttheta(\ga_2)= g_1, \quad \ttheta
(\ga_3) = g _3, \quad \ttheta (\ga_4) = g _1 g_2 g_3^{-1}$.\\
$V_-= V_{22} \oplus V_{23} \oplus 2V_{24}$, where
$\dim(V_{22}) = \dim(V_{23}) = \dim(V_{24})=2$
(notation of \verb|MAGMA|).\\
$(S^2(V_-))^{\TG} = (V_{22} \otimes V_{23})^{\tG} $ and it is one
dimensional, hence condition \eqref{condA} is satisfied.  We check now
condition \eqref{condB}.  Consider
$\langle g_1\rangle \cong \Z/2 \subset \tG$ and denote by
$D = \tC/\langle g_1\rangle$. One sees that $\tC \rightarrow D$ is a
double cover ramified in 10 points, hence $g(D) = 6$.  We have the
following commutative diagram:
\begin{equation}
  \label{diammaeven}
  \begin{tikzcd}[row sep=tiny]
    & \tC \arrow {ld}
    \arrow{rd}  &    \\
    D= \tC/\langle g_1 \rangle \arrow{dr} {p} & &
    C= \tC/\langle \sigma \rangle \arrow{dl}{q}     \\
    &F= \tC/\langle \sigma, g_1 \rangle.&
  \end{tikzcd}
\end{equation}
where $q$ is a double cover ramified in 10 points and $F$ is a genus 2
curve with an action of $\langle g_3 \rangle \cong \Z/5$.  Therefore
$F$ is a CM curve for any value of the parameter. It follows that $F$
is constant.  From the above diagram one sees that
$P(\tC, C) \sim P(D,F) \times A$, where $A$ is an abelian surface.
Therefore, to prove that $P(\tC, C)$ moves, we will show that $P(D,F)$
moves. Since $F$ is fixed, this is equivalent to show that $J(D)$
moves in a one dimensional family.  Denote by
$\psi: \tC \to {\proj}^1 = \tC/\tG$ our original covering, by
$P_1,P_2, P_3, P_4$ the branch points of $\psi$ and by
$\pi: F \to F/\langle g_2, g_3 \rangle \cong \tC/\tG$. The branch
points of the map $\pi$ (given by the $\Z/10$-action on $F$) are
$P_1, P_3, P_4$, hence, since $F$ does not move, the three branch
points of the original map $\psi$, $P_1,P_3,P_4$ do not move,
therefore $P_2$ must move.  The map $p$ has 4 branch points
$\{e_1, e_2, e_3, e_4\} \subset F$, where $\pi(e_i) = P_2$ for
$i =1,2,3$, while $\pi(e_4) = P_4$.  Since $P_2$ moves, the three
branch points $\{e_1, e_2, e_3\}$ move, hence the covering $p:D \to F$
moves and so $D$ (and $J(D)$) moves. This concludes the argument.

\newpage
\begin{center} {\bf Appendix}
\end{center}

This appendix gives the relevant information on the script and
contains the tables of all the Prym data, which satisfy condition
\eqref{condA}.  Table 1 is for \'etale Prym data, while Table 2 is for
the ramified Prym data.

To perform the calculations done in this paper we wrote a \verb|GAP4|
\cite{GAP4} and a \verb|MAGMA| \cite{MA} script, both of them are
available at:

\smallskip

\begin{center}
  \verb|http://www.dima.unige.it/~penegini/publ.html|
\end{center}

\smallskip
\noindent
We now describe the \verb|GAP4| program \verb|PrymGenerators_v2.gap|.

The main routine is the function \verb|PossibleGoodPrym|.  One fixes a
range for the genus of the covering curve $\tilde{C}$ (we used
$4 \leq \tilde{g} \leq 30$), a range for the number of branch points
of the covering $\tilde{C} \rightarrow \PP^1$ (we considered only the
case of 4 branch points) and the type $x$ of Prym.  The latter means
the following:: $x=1$ for \'etale Prym datum, $x=2$ for ramified Prym
datum satisfying (2) of Definition \ref{ramiprym}, $x=3$ for ramified
Prym datum satisfying (1) of Definition \ref{ramiprym}.  Once all
these data are fixed the program performs the following calculations.

\medskip

\begin{enumerate}
\item First it calculates all possible signature types (Group order,
  $\mathbf{m}$) for the coverings $\tilde{C} \rightarrow \PP^1$.
\item After that, the program calculates for each signature type all
  the Prym data up to Hurwitz equivalence. These are: a group $\tG$ of
  a fixed order, all spherical systems of generators (SSG) for $\tG$
  (images of $\ttheta$) of the fixed type $\mathbf{m}$ up to Hurwitz
  moves, and an order $2$ central element in $\tG$.  Here the script
  calls some parts of the script given in \cite{matteo2011} (in
  particular the function \verb|NrOfComponents|). We refer to the
  appendix of \cite{matteo2011} for an explanation of the algorithm.
\end{enumerate}

\medskip

  While looking for the Prym data in the unramified case we can forget
  from the very beginning the cyclic groups thanks to the following
  lemma.
  \begin{LEM}
    If $\datu$ is an unramified Prym datum, then $\tG$ is not cyclic.
  \end{LEM}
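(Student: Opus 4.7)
The plan is to argue by contradiction. Suppose $\tG$ were cyclic, write $\tG=\langle g\rangle$ of order $n$. Because $\sigma\in\tG$ has order $2$, the order $n$ must be even, so write $n=2^k m$ with $m$ odd and $k\geq 1$. In a cyclic group there is a unique element of order $2$, hence $\sigma=g^{n/2}$.

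Next I would translate the Prym-datum condition $\sigma\notin\langle\ttheta(\ga_i)\rangle$ into a numerical constraint on $\ttheta(\ga_i)$. Writing $\ttheta(\ga_i)=g^{a_i}$, the subgroup generated is $\langle g^{d_i}\rangle$ with $d_i=\gcd(a_i,n)$, and this subgroup contains $g^{n/2}$ precisely when $d_i\mid n/2$. So the datum condition says $d_i\nmid n/2$ for every $i$. Since $d_i\mid n=2^k m$, the divisors of $n$ that do not divide $n/2=2^{k-1}m$ are exactly those whose $2$-adic valuation equals $k$. Hence $v_2(d_i)=k$, which forces $v_2(a_i)\geq k$, i.e. $2^k\mid a_i$ for every $i$.

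Finally I would derive the contradiction with surjectivity of $\ttheta$. Indeed, $2^k\mid a_i$ for all $i$ means every $\ttheta(\ga_i)$ lies in the proper subgroup $\langle g^{2^k}\rangle\subsetneq\tG$ (proper because $k\geq 1$). Then the image of $\ttheta$ is contained in this proper subgroup, contradicting the assumption that $\ttheta:\Ga_r\ra\tG$ is an epimorphism. No step here looks delicate: the whole argument is just the observation that, in a cyclic group of even order, avoiding the unique involution forces every generator-candidate into the index-$2^k$ subgroup of even powers, which is incompatible with generating $\tG$. Thus $\tG$ cannot be cyclic.
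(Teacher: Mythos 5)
Your proof is correct, and it is essentially the same argument as the paper's: both translate the condition $\sigma\notin\langle\ttheta(\ga_i)\rangle$ into a $2$-adic constraint on the exponents (the paper via the parity of the order of $\ttheta(\ga_i)$, you via $\gcd$'s and valuations, which amount to the same criterion) and then contradict surjectivity of $\ttheta$ because all the $\ttheta(\ga_i)$ land in a proper subgroup. No issues.
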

  \begin{proof}
    Assume by contradiction that $\tG = \sx x \xs$ with $o(x) = 2n$
    and let $\{x^{n_i}=\ttheta(\ga_i)\}_{i=1}^k$ be a set of
    generators for $\tG$. There is only one element of order 2 in
    $\tG$, namely $\sigma :=x^n$. It follows that
    $\sigma \in \sx a\xs$ if and only if $o(a)$ is even. Since
    $\sigma \not \in \sx x^{n_i} \xs$, $o(x^{n_i} ) $ is odd for any
    $i$. On the other hand if $a =x^s$, then $o(a) = 2n / (2n, s)$.
    Write $n= 2^p q$ and $n_i = 2^{p_i} q_i$ with $q$ and $q_i$
    odd. Then
    $ o(x^{n_i} ) = 2 ^{p+1 - \min\{ p+1, p_i\} } \cd \tfrac{q}{(q,
      q_i)}.  $
    As this number is odd, we have $p_i \geq p+1$, so $n_i$ is even
    for any $i$.  Then clearly $[n_i]_{2n}$ cannot generate $\Z/(2n)$,
    contradiction.
  \end{proof}

We used the \verb|GAP4| program because the algorithm for finding
inequivalent pairs $(\tG$, SSG$)$ up to Hurwitz moves is efficient and
quite fast. One can find the output of this program at the web page
\begin{center}
   \verb|http://www.dima.unige.it/~penegini/publ.html|
\end{center}

\medskip

The remaining computations are performed using a \verb|MAGMA| program
\verb|PrymMagma_v6|, that we now describe.

\medskip

\begin{enumerate}
\item The function \verb|GoodExample| calculates the dimension
  $N_1:= \dim(S^2V)^\tG$ using the script \verb|PossGruppigFix_v2Hwr|
  written for the paper \cite{fgp} (we refer to \cite{fgp} for
  explanations). The input for this function are the data previously
  calculated by \verb|PrymGenerators_v2.gap|.
\item The function \verb|ProjSSG| constructs an $SSG$ for the group
  $G$ (for the covering $C \rightarrow C/G \cong \PP^1$) compatible
  with the given $SSG$ of $\tilde{G}$.
\item Afterwards we calculate the dimension $N_2= \dim(S^2V_+)^G$,
  again with the function \verb|GoodExample|.
\item The function \verb|GoodPrym(N1,N2)| checks condition
  \eqref{condA} in the form $N_1-N_2=1$. If the condition is satisfied
  the program will print \verb|GOOD EXAMPLE|. The resulting lists are
  Table 1 and 2 here.
\item Finally the function \verb|IsGoodGood| checks condition
  \eqref{condB1}.
\end{enumerate}

\medskip

All the results are available at
\begin{center}
   \verb|http://www.dima.unige.it/~penegini/publ.html|
\end{center}

A brief explanation of the tables.

The tables list all Prym data with $\tg \leq 28$ satisfying conditions
\eqref{condA} and \eqref{condB1} up to Hurwitz equivalence. It also
contains all the non-abelian examples satisfying \eqref{condA} (but
not \eqref{condB1}) for which we have verified condition
\eqref{condB}. For each datum we list a number that identifies the
datum, the genera of $\tC$ and $C$, the group $\tG$ and its
\verb|MAGMA| \verb|SmallGroupId|. The last two columns contain
information about conditions \eqref{condB1} and \eqref{condB}.  There
is a checkmark for \eqref{condB1} if and only if \eqref{condB1} is
satisfied.  If \eqref{condB1} is true, then \eqref{condB} follows.
When there is a checkmark for \eqref{condB}, this means that we proved
that \eqref{condB} holds.

In the tables some data are grouped together because they differ only
by $\ttheta$.

We do not give the full presentation of $\tG$, nor the morphism
$\ttheta$, since that would take too much space. The complete
information is of course available at the page above.

The data satisfying \eqref{condB} yield Shimura curves in the Prym
loci.

\begin{center}  
\begin{eqnarray*}
\begin{array}{|c|c|c|c|c|c|c|}
\hline
n & g(\tilde{C}) &  g(C)& \tG & \verb|SmallGroupId| & B1 & B\\
\hline
      1 - 3  &        5  &  3  &   \ZZ/2 \times \ZZ/4  &  G(8,2)  & \checkmark  & \checkmark \\ 
\hline
       4  &        5  &  3  &     \ZZ/2 \times \ZZ/6   &  G(12,5)  & \checkmark  & \checkmark  \\ 
  \hline 
       5  &        5  &  3  &    (\ZZ/2 \times \ZZ/4) \rtimes \ZZ/2  & G(16,3)  & \checkmark   & \checkmark \\ 
  \hline 
       6 - 8 &        5 &  3  &   \ZZ/2 \times \ZZ/2 \times \ZZ/4    &  G(16,10)  & \checkmark & \checkmark   \\ 
  \hline 
       9  &        5 &  3  &     \ZZ/2 \times A_4  &  G(24,13)  & \checkmark & \checkmark   \\ 
  \hline \hline 
       10 &        7 &  4  &     \ZZ/2 \times \ZZ/6  &  G(12,5)  & \checkmark   & \checkmark    \\ 
  \hline 
       11  &     7 &  4  &     \ZZ/4 \rtimes \ZZ/4   &  G(16,4)   & \checkmark  & \checkmark \\ 
         \hline 
       12  &      7 &  4  &    \ZZ/2 \times Q_8  &  G(16,12)  & \checkmark  & \checkmark   \\ 
 \hline \hline 
        13  &    9 &  5  &     \ZZ/4 \times \ZZ/4   &  G(16,2)   & \checkmark  & \checkmark \\ 
  \hline 
       14 &   9 &  5  &     \ZZ/4 \rtimes \ZZ/4   &  G(16,4)  & \checkmark   & \checkmark \\ 
   \hline   
       15 - 10  &     9 &  5  &     \ZZ/2 \times \ZZ/8   &  G(16,5)   & \checkmark & \checkmark  \\ 
   \hline 
       20  - 21 &     9 &  5  &     \ZZ/2 \times \ZZ/2 \times \ZZ/4   &  G(16,10)  & \checkmark  & \checkmark  \\ 
  \hline 
       23  - 24&     9 &  5  &     \ZZ/2 \times \ZZ/2 \times \ZZ/6    &  G(24,15)   & \checkmark  & \checkmark \\
    \hline 
       25  &    9 &  5  &    \ZZ/2 \times ((\ZZ/2 \times \ZZ/4) \rtimes \ZZ/2)   &  G(32,22)  & \checkmark  & \checkmark \\
       \hline 
       26  &     9 &  5  &    \ZZ/4 \times D_4  &  G(32,25)  & \checkmark   & \checkmark \\
          \hline                  \hline  
         27  &    11 &  5  &    \ZZ/2 \times \ZZ/8   &  G(16,5)  & \checkmark  & \checkmark \\
       \hline 
       28  &    11 &  6  &     \ZZ/2 \times \ZZ/12    &  G(24,9) & \checkmark  & \checkmark  \\
   \hline \hline 
       29  &    13&  7  &     \ZZ/2  \times \ZZ/8    &  G(16,5)  & \checkmark  & \checkmark \\
  \hline 
    30  &    13 &  7  &   \ZZ/2  \times \ZZ/10  &  G(20,5) & \checkmark  & \checkmark  \\
             \hline 
     31 &    13 &  7  &    \ZZ/2  \times \ZZ/12  &  G(24,9)  & \checkmark  & \checkmark \\
  \hline 
       32  &    13 &  7  &    (\ZZ/2  \times \ZZ/8) \rtimes \ZZ/2   &  G(32,9)  & \checkmark  & \checkmark \\
    \hline 
      33 &    13 &  7  &    (\ZZ/4  \times \ZZ/4) \rtimes \ZZ/2   &  G(32,24)  & \checkmark & \checkmark  \\
   \hline \hline 
       34 &    15 &  8  &   \ZZ/2  \times \ZZ/12  &  G(24,9)  & \checkmark  & \checkmark  \\
   \hline \hline 
      35  &    17 &  9  &    \ZZ/2  \times \ZZ/12  &  G(24,9)  & \checkmark  & \checkmark \\
       \hline 
                    \hline
 \end{array}
\end{eqnarray*}
\end{center}

\begin{center}  
       \begin{eqnarray*}
\begin{array}{|c|c|c|c|c|c|c|}
\hline
n & g(\tilde{C}) &  g(C)& \tG & \verb|SmallGroupId| & B1 & B\\
\hline
       36  &    17 &  9  &      \ZZ/2 \times \ZZ/4 \times \ZZ/4   &  G(32,21) & \checkmark  & \checkmark  \\
                \hline 
       37 &    17 &  9  &      (\ZZ/2 \times \ZZ/12) \rtimes \ZZ/2   &  G(48,14) & \checkmark   & \checkmark \\
         \hline 
         38  &   17 &  9  &(\ZZ/4 \times \ZZ/4 \times \ZZ/2) \rtimes  \ZZ/2&  G(64,71)  &    \checkmark   & \checkmark \\
          \hline \hline 
       39  &    19 &  10  &      (\ZZ/2 \times \ZZ/3 \times \ZZ/3) \rtimes S_3  &  G(108,28)  &   & \checkmark \\
       \hline \hline 
       40 &    21 & 11  &      \ZZ/4 \times \ZZ/8   &  G(32,3) & \checkmark  & \checkmark  \\
                \hline 
       41 &    21 &  11  &    \ZZ/4 \times D_8  &  G(64,118)  & \checkmark  & \checkmark \\ 
             \hline \hline 
       42  &    25 & 13  &  \ZZ/2 \times SL(2, 3)&  G(48,32)  &  &  \checkmark \\
                 \hline 
       43  &    25 & 13  &    A_4 \rtimes \ZZ/4 &  G(48,30)  &  &  \checkmark \\
           \hline 
        \hline
    \end{array}
\end{eqnarray*}
\end{center}
\begin{center}
Table 1
\end{center}

\begin{eqnarray*}
\begin{array}{|c|c|c|c|c|c|c|}
\hline
n & g(\tilde{C}) &  g(C)& \tG & \verb|SmallGroupId| &B1 &  B\\
   \hline 
1  &    4 & 2  & \ZZ/6 &  G(6,2)  & \checkmark & \checkmark  \\  
   \hline 
2  &    4 & 2  & D_6 &  G(12,4)  & \checkmark  & \checkmark  \\ 
         \hline \hline 
 3  - 4&  8 & 4  &  \ZZ/10&  G(10,2)  & \checkmark &  \checkmark\\
 \hline 
5  &    8 & 4  & \ZZ/3 \times D_4 &  G(24,10)  &  &  \checkmark   \\ 
         \hline \hline 
6  - 7&  12 & 6  &  \ZZ/14&  G(14,2)  & \checkmark &  \checkmark\\
               \hline \hline 
8  &  14 & 7  &  \ZZ/18&  G(18,2)  & \checkmark &  \checkmark\\
      \hline \hline 
9 &  16 & 8  &  \ZZ/5 \times D_4 &  G(40,10)  & &  \checkmark  \\
         \hline \hline 
    \end{array}
\end{eqnarray*}
\begin{center}
Table 2
\end{center}

\end{document}